\documentclass[11pt]{article}

\usepackage[T1]{fontenc}
\usepackage[utf8]{inputenc}
\usepackage{lmodern}
\usepackage{amsmath,amssymb,amsthm,mathtools}
\usepackage[letterpaper,margin=1in]{geometry}
\usepackage{microtype}
\usepackage{enumitem}
\usepackage{indentfirst}

\usepackage{xcolor}
\usepackage{tikz}

\usepackage[
pdfauthor={Songling Shan and Yucheng Zhong},
pdftitle={Total Vertex Irregularity Strength of Cubic and 4-Regular Graphs},
pdfstartview=XYZ,
bookmarks=true,
colorlinks=true,
linkcolor=blue,
urlcolor=blue,
citecolor=blue,
linktocpage=true,
hyperindex=true
]{hyperref}

\tikzset{
	tvvertex/.style={circle,draw=black,fill=white,text=black,
		minimum size=5.2mm,inner sep=0pt,font=\small},
	tvedge/.style={draw=black!45,line width=.45pt},
	tvheavy/.style={draw=black,line width=1.15pt},
	tvweight/.style={font=\scriptsize,text=black,inner sep=2pt}
}
\newcommand{\tvvertex}[4]{%
	\node[tvvertex,label={[tvweight]#4:{$[#3]$}}] at (#1) {$#2$};}

\newtheorem{theorem}{Theorem}[section]
\newtheorem{lemma}[theorem]{Lemma}
\newtheorem{claim}[theorem]{Claim}

\newtheorem{conjecture}[theorem]{Conjecture}

\newtheorem*{externallemma}{Lemma}
\theoremstyle{remark}

\numberwithin{equation}{section}

\DeclareMathOperator{\tvs}{tvs}
\DeclareMathOperator{\wt}{wt}
\newcommand{\ceil}[1]{\left\lceil #1\right\rceil}
\newcommand{\floor}[1]{\left\lfloor #1\right\rfloor}
\newcommand{\defect}{\operatorname{def}}

\newenvironment{claimproof}[1][Proof]{\begin{proof}[#1]}{\end{proof}}

\begin{document}
	\title{Total Vertex Irregularity Strength of Cubic and $4$-Regular Graphs}
	\author{Songling Shan\footnote{Auburn University, Department of Mathematics and Statistics, Auburn, AL 36849.
			Email: \texttt{szs0398@auburn.edu}.
			Supported in part by NSF grant DMS-2451895.}
		\qquad
		Yucheng Zhong\footnote{Auburn University, Department of Mathematics and Statistics, Auburn, AL 36849.
			Email:	\texttt{yzz0237@auburn.edu}. }
	}

	\date{\today}
	\maketitle

	\begin{abstract}
		Let $G$ be a graph and let $k$ be a positive integer. A total $k$-labeling of $G$ assigns to each vertex and each edge a label from $\{1,\ldots,k\}$. The weight of a vertex is the sum of its label and the labels of its incident edges. A total labeling is vertex irregular if all vertex weights are distinct. The total vertex irregularity strength $\tvs(G)$ is the smallest $k$ for which $G$ has a vertex irregular total $k$-labeling. For a $d$-regular graph $G$ on $n$ vertices, a counting argument gives $\tvs(G)\ge\lceil(n+d)/(d+1)\rceil$. The restriction of a conjecture of Nurdin, Baskoro, Salman, and Gaos to regular graphs asserts that this bound is attained. We prove this assertion for cubic and $4$-regular graphs. We also show that, for every fixed $d\ge2$, a recent theorem on prescribed degree frequencies implies the assertion for all sufficiently large $d$-regular graphs.
	\end{abstract}

	\medskip
	\noindent\textbf{Keywords.} Graph labeling; Total vertex irregularity strength; Matching.

	\section{Introduction}\label{sec:intro}

	All graphs considered in this paper are finite and simple, and need not be connected.  Let $G$ be a graph.  Denote by $V(G)$ and $E(G)$ the vertex set and edge set of $G$, respectively.  For $v\in V(G)$, $N_G(v)$ denotes the set of neighbors of $v$ in $G$, and $d_G(v):=|N_G(v)|$ is the degree of $v$ in $G$.  We denote by $\delta(G)$ and $\Delta(G)$ the minimum and maximum degree of $G$, respectively.  For a graph $H$ and an integer $i\ge0$, let $V_i(H)=\{v\in V(H):d_H(v)=i\}$ and $n_i(H)=|V_i(H)|$.  For $S\subseteq V(G)$, the subgraph of $G$ induced by $S$ is denoted by $G[S]$, and $G-S:=G[V(G)\setminus S]$.  We write $G-x$ for $G-\{x\}$.  If $F\subseteq E(G)$, then $G-F$ is obtained from $G$ by deleting all the edges of $F$.  For a graph $H$ and a positive integer $k$, $kH$ denotes the disjoint union of $k$ copies of $H$.  For two integers $p$ and $q$, let $[p,q]=\{i\in\mathbb{Z}: p\le i\le q\}$.

	A \emph{total $k$-labeling} of $G$ is a mapping $\lambda:V(G)\cup E(G)\to[1,k]$.  For $v\in V(G)$, the \emph{weight} of $v$ under $\lambda$ is
	\[
	\wt_{\lambda}(v)=\lambda(v)+\sum_{e\ni v}\lambda(e).
	\]
	We say that $\lambda$ is \emph{vertex irregular} if $\wt_{\lambda}(u)\ne\wt_{\lambda}(v)$ for any two distinct vertices $u,v\in V(G)$.  The \emph{total vertex irregularity strength} of $G$, denoted $\tvs(G)$, is the smallest integer $k$ for which $G$ has a vertex irregular total $k$-labeling.  This parameter was introduced by Ba\v{c}a, Jendro\v{l}, Miller, and Ryan~\cite{BJMR}.

	Suppose that $G$ is $d$-regular on $n$ vertices.  Under a total $k$-labeling of $G$, the weight of a vertex is a sum of $d+1$ labels and so lies in $[d+1,(d+1)k]$.  Since the $n$ vertex weights are distinct in a vertex irregular labeling, we have $(d+1)k-d\ge n$, that is,
	\begin{equation}\label{eq:regular-lower}
		\tvs(G)\ge \ceil{\frac{n+d}{d+1}}.
	\end{equation}
	The same argument gives a lower bound for an arbitrary graph $G$ with minimum degree $\delta$ and maximum degree $\Delta$. For each $i\in[\delta,\Delta]$, the $\sum_{j=\delta}^{i}n_j(G)$ vertices of degree at most $i$ have weights in $[\delta+1,(i+1)k]$ under a total $k$-labeling. Hence
	\[
	\tvs(G)\ge\ceil{\frac{\delta+\sum_{j=\delta}^{i}n_j(G)}{i+1}}.
	\]
	Nurdin, Baskoro, Salman, and Gaos~\cite{NBSG} conjectured that the largest of these bounds is always attained.

	\begin{conjecture}[Nurdin, Baskoro, Salman, and Gaos~\cite{NBSG}]\label{conj:NBSG}
		For every graph $G$ with minimum degree $\delta$ and maximum degree $\Delta$,
		\[
		\tvs(G)=\max_{\delta\le i\le\Delta}\ceil{\frac{\delta+\sum_{j=\delta}^{i}n_j(G)}{i+1}}.
		\]
	\end{conjecture}

	For a $d$-regular graph on $n$ vertices, Conjecture~\ref{conj:NBSG} asserts equality in~\eqref{eq:regular-lower}. The conjecture is false for general graphs. Susanto, Simanjuntak, and Baskoro~\cite{SSB} constructed infinite families of counterexamples for which $\tvs(G)$ exceeds the conjectured value by exactly one. Their constructions do not give counterexamples for regular graphs, and the restriction of Conjecture~\ref{conj:NBSG} to regular graphs remains open.

	For cubic graphs, Conjecture~\ref{conj:NBSG} asserts that $\tvs(G)=\ceil{(n+3)/4}$. Barra and Afifurrahman~\cite{BA} referred to this statement as a folklore conjecture and proved it for cubic graphs with a perfect matching. Using a recent degree-balanced decomposition theorem of Lu\v{z}ar, Przyby\l{}o, and Sot\'ak~\cite{LPS}, we remove the perfect matching assumption.

	\begin{theorem}\label{thm:cubic}
		Let $G$ be a cubic graph on $n$ vertices, and let $s=\ceil{(n+3)/4}$. Then $\tvs(G)=s$. Moreover, $G$ has a vertex irregular total $s$-labeling in which every edge receives label $1$ or $s$.
	\end{theorem}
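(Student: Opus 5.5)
The plan is to restrict edge labels to $\{1,s\}$. Then the edge labels amount to choosing a spanning subgraph $H\subseteq G$, namely the set of edges labelled $s$. For a vertex $v$ with $d_H(v)=i$ and vertex label $\lambda(v)\in[1,s]$,
\[
\wt_\lambda(v)=3+(s-1)i+(\lambda(v)-1)\in I_i:=[\,3+(s-1)i,\;3+(s-1)i+s-1\,].
\]
Within a class, $\lambda(v)$ can be chosen freely, so the vertices of $V_i(H)$ can receive any distinct weights of $I_i$. The intervals $I_0,\dots,I_3$ tile $[3,4s-1]$, which has $4s-3\ge n$ integers, and consecutive intervals share exactly one endpoint. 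The lower bound $\tvs(G)\ge s$ is \eqref{eq:regular-lower}, so it remains to construct the labeling.

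First I would isolate a purely numerical condition on the degree frequencies $a_i=n_i(H)$ under which weights can be assigned greedily. Process the classes $i=0,1,2,3$ in order, and give the vertices of class $i$ consecutive weights starting at $\max\{3+(s-1)i,\;\text{(last weight used)}+1\}$. This succeeds exactly when each class fits inside $I_i$. It is enough, for instance, that $a_i\le s-1$ for all $i$: then the half-open blocks $[3+(s-1)i,\,3+(s-1)i+s-2]$ are pairwise disjoint, and each has size $s-1$. A class of size exactly $s$ is also allowed, provided it can use the shared endpoint with a neighbouring class that has slack. I would write down the precise prefix/suffix-sum feasibility condition and check that it holds whenever the $a_i$ differ from $n/4$ by at most about one and $\sum a_i=n\le 4s-3$.

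The main step is to produce $H$ with $n_0(H),\dots,n_3(H)$ balanced in this sense. Here I would invoke the degree-balanced decomposition theorem of Lu\v{z}ar, Przyby\l{}o, and Sot\'ak~\cite{LPS}. As I understand it, for a cubic graph it yields a spanning subgraph whose degree classes $0,1,2,3$ have sizes as equal as possible, up to an additive constant. The relevant constraint is parity: $\sum_i i\,a_i=2|E(H)|$ is even, so $a_1+a_3$ must be even, and for some residues of $n \bmod 4$ the perfectly balanced vector is infeasible. I expect this to be the obstacle. One has to check, residue by residue for $n\bmod 8$ (note $n$ is even), that some parity-compatible vector $(a_0,\dots,a_3)$ is both attainable by the theorem and satisfies the greedy feasibility condition. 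The degree classes are symmetric under complementation $H\mapsto G-E(H)$, which swaps $a_i$ with $a_{3-i}$, so the tight cases can be placed wherever an overlap endpoint is free.

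If the cited theorem only gives $|a_i-a_j|\le 2$ rather than $\le 1$, I would first try a local exchange argument to repair it: toggle the edges of an alternating path between two vertices whose $H$-degrees sit in overfull classes. Failing that, I would rely on the slack $4s-3-n\in\{0,1,2,3\}$ together with the shared endpoints of the $I_i$, handling the equality case $n=4s-3$ separately. Once $H$ and the weight assignment are fixed, setting edge labels to $s$ on $E(H)$ and $1$ elsewhere gives a vertex irregular total $s$-labeling of the required form.
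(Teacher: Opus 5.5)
Your setup is the paper's: edge labels in $\{1,s\}$ encode a spanning subgraph $H$, and $n_i(H)\le s-1$ for all $i$ suffices (Lemma~\ref{lem:cubic-extension}). There is a small slip in your weight formula. The edge labels at a vertex of $H$-degree $i$ sum to $3+i(s-1)$, so $\wt_\lambda(v)=3+(s-1)i+\lambda(v)$, and the intervals are $[4+(s-1)i,\,4+(s-1)(i+1)]$; this shift is harmless. Your worries about parity and about an additive error are unnecessary. Theorem~\ref{thm:LPS} gives $n_i(H)\in\{\lfloor n/4\rfloor,\lceil n/4\rceil\}$ exactly, and since $n$ is even, $\lceil n/4\rceil=s-1$. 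So the lemma applies directly, with no residue analysis or exchange argument.

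The genuine gap is that this theorem excludes $K_4$, $K_{3,3}$ and $3K_4$, and your proposal never treats them. This is not cosmetic, since for two of them your main condition cannot be met.
\begin{itemize}
\item For $K_4$ ($s=2$), the condition $a_i\le s-1=1$ would require four distinct $H$-degrees on four vertices, which no graph has.
\item For $3K_4$ ($s=4$), listing the degree-count vectors of spanning subgraphs of $K_4$ shows that $(3,3,3,3)$ is unattainable.
\end{itemize}
In these two cases you must genuinely use classes of size $s$ at shared endpoints, with an explicit $H$. For $K_4$, take $H$ to be a two-edge path; the counts are $(1,2,1,0)$ and the weights are $4,5,6,7$. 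For $3K_4$, take $H$ empty on one copy, a spanning path on another, and the whole third copy; the counts are $(4,2,2,4)$ and the weights are $[4,7]\cup[8,11]\cup[13,16]$. For $K_{3,3}$ ($s=3$), the balanced vector is also unattainable. However, the unbalanced $H=P_4\cup 2K_1$ has counts $(2,2,2,0)$, which satisfy $a_i\le s-1$. With these three finite cases supplied, your argument coincides with the paper's.
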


	Our next result is the $4$-regular case.

	\begin{theorem}\label{thm:four-main}
		Let $G$ be a $4$-regular graph on $n$ vertices, and let $s=\ceil{(n+4)/5}$.  Then $\tvs(G)=s$.  Moreover, $G$ has a vertex irregular total $s$-labeling whose set of vertex weights is exactly $[5,n+4]$.
	\end{theorem}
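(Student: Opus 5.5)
The lower bound $\tvs(G)\ge s$ is \eqref{eq:regular-lower}, so everything rests on building a total $s$-labeling whose weights are exactly $[5,n+4]$. I would adapt the cubic strategy. First I would use Petersen's $2$-factor theorem to split $E(G)$ into two $2$-factors $F_1$ and $F_2$, each a disjoint union of cycles. The idea is to give each edge a label from a small set, namely $1$, $s$, or a few intermediate values near the boundaries. Then every vertex weight has the form $\lambda(v)+(\text{edge sum})$. The edge sums should split $V(G)$ into about five \emph{bands}, and within each band the vertex labels in $[1,s]$ sweep out a contiguous interval of weights.

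Concretely, suppose all edge labels lie in $\{1,s\}$. A vertex with $j$ heavy incident edges, $j\in[0,4]$, then has edge sum $4+j(s-1)$. Its weight ranges over $[5+j(s-1),\,4+j(s-1)+s]$, and consecutive bands overlap in exactly one value. Together the bands cover $[5,5s]\supseteq[5,n+4]$.

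So I want a spanning subgraph $H$ (the heavy edges) with prescribed degree frequencies $n_j(H)$. These should be about $s-1$ vertices of each degree $j\le 3$ and the remainder of degree $4$. After that, vertex labels are assigned by ordering the vertices within each degree class, which is routine. The frequencies cannot be arbitrary, since $\sum_j j\,n_j(H)$ must be even. The one-value overlaps between bands, together with the option of using a label $2$ or $s-1$ on a single edge, give the slack needed to fix parity and the residue of $n$ mod $5$.

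The key step is therefore to realize, inside an arbitrary $4$-regular graph, a spanning subgraph whose degree counts are close to $(s,s,s,s,\ast)$ over degrees $0$ through $4$, up to an additive constant. I would try to get this from a degree-balanced decomposition in the spirit of Lu\v{z}ar, Przyby\l{}o, and Sot\'ak, applied to each $2$-factor. From each cycle of $F_1$ and $F_2$ I would select sub-paths (alternate edges, or runs of consecutive edges) as heavy edges, so that $d_H(v)=d_{H\cap F_1}(v)+d_{H\cap F_2}(v)$ with each summand in $\{0,1,2\}$. I would then tune these choices greedily, cycle by cycle, to move the counts toward the targets. Each local change, such as extending a heavy run by one edge on a cycle, changes the degree counts by a bounded amount. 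A discrete intermediate-value argument then lets me hit the targets up to $O(1)$ error, and the overlaps absorb that error.

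The main obstacle is this last point: exact control of the frequencies when the cycles of $F_1$ and $F_2$ interact and many cycles are short, for example triangles. I expect a finite case analysis for small $n$ (say $n\le 15$), plus an exceptional treatment when the target frequencies force parity conflicts. In those cases I would replace one heavy edge by a label $s-1$ and shift the vertex labels of its two endpoints accordingly.
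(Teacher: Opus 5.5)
Your labeling scheme is sound: edge labels in $\{1,s\}$ with one weight band per $H$-degree class. It is in fact the scheme in the proof of Theorem~\ref{thm:large-d}. The gap is the input it needs. You require a spanning subgraph $H$ of an \emph{arbitrary} $4$-regular graph whose classes $n_0(H),\dots,n_3(H)$ are all essentially $r=s-1$. That is the $4$-regular analogue of Theorem~\ref{thm:LPS}, a genuine theorem rather than a routine step. For cubic graphs it is the Lu\v{z}ar--Przyby\l{}o--Sot\'ak theorem, which even has exceptions. For $4$-regular graphs the paper has it only for sufficiently large $n$, via Theorem~\ref{thm:CTW}.

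Your sketch does not supply it. A discrete intermediate-value argument controls one integer quantity moving in bounded steps, but you must hit four counts simultaneously. A local move on a cycle of $F_1$, such as lengthening a heavy run, changes several of $n_0,\dots,n_4$ at once, in a way dictated by how that cycle interleaves with $F_2$. Nothing shows that the reachable frequency vectors include the target. You flag exactly this interaction as the main obstacle but do not resolve it.

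Nor can an $O(1)$ error be absorbed. With edge labels in $\{1,s\}$, a vertex of $H$-degree $j$ has weight in $[5+jr,5+(j+1)r]$. Weights below $5+jr$ can come only from classes $0,\dots,j-1$, and all their weights are at most $5+jr$. So a weight set exactly $[5,n+4]$ forces $n_0+\cdots+n_{j-1}\in\{jr,jr+1\}$ for every $j\in[1,4]$ (when $r\ge3$). For $n=5r+1$, every one of the $5r+1$ available weights is used. The overlaps give at most one unit of slack per boundary. An edge with an intermediate label also shifts its other endpoint, so that repair needs its own argument.

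The idea you are missing is how the paper avoids needing a balanced decomposition of $G$ itself. It removes a maximum matching $M$ and smooths the $M$-uncovered vertices into virtual edges, giving a simple cubic graph $F$ (Lemma~\ref{lem:smoothing}). It applies Theorem~\ref{thm:LPS} to $F$ with labels $1$ and $s$. It then uses the labels of the matching edges, which range freely over $[1,s]$, as the slack. Each matched pair reaches its exact target weights provided the offsets $t$ of its ends differ by at most $r$, which the block ordering of Claim~\ref{clm:ordering} guarantees. The uncovered vertices fill reserved intervals, and the cases $kK_5\cup J_7$ and the three exceptional $F$ are handled separately.
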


	In Section~\ref{sec:cubic}, we prove Theorem~\ref{thm:cubic}. Section~\ref{sec:four} is devoted to the proof of Theorem~\ref{thm:four-main}. In Section~\ref{sec:remarks}, we apply a recent theorem of Cao, Tang, and Wu~\cite{CTW} to show that, for every fixed $d\ge2$, the bound in~\eqref{eq:regular-lower} is attained by all sufficiently large $d$-regular graphs. We conclude by stating the conjecture for regular graphs.

	\section{Proof of Theorem~\ref{thm:cubic}}\label{sec:cubic}

	We use the following theorem of Lu\v{z}ar, Przyby\l{}o, and Sot\'ak~\cite[Theorem~1.4]{LPS}.

	\begin{theorem}[Lu\v{z}ar, Przyby\l{}o, and Sot\'ak~\cite{LPS}]\label{thm:LPS}
		Let $G$ be a cubic graph on $n$ vertices.  If $G$ is not isomorphic to $K_4$, $K_{3,3}$, or $3K_4$, then $G$ has a spanning subgraph $H$ with $n_i(H)\in\{\floor{\frac n4},\ceil{\frac n4}\}$ for each $i\in[0,3]$.
	\end{theorem}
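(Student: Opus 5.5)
Since Theorem~\ref{thm:LPS} is quoted from~\cite{LPS}, we only outline how one could prove it; the paper itself uses it as a black box. The natural route is an extremal (potential-function) argument on spanning subgraphs. Among all spanning subgraphs $H$ of $G$, choose one minimizing the imbalance
\[
\Phi(H)=\sum_{i=0}^{3}\bigl|n_i(H)-\tfrac n4\bigr|,
\]
breaking ties by a secondary parameter chosen later, for instance the number of edges of $H$. The goal is to show that if some $n_i(H)$ lies outside $\{\floor{n/4},\ceil{n/4}\}$, then a local modification strictly decreases $\Phi$, unless $G$ is one of the listed exceptions.

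The basic modification is toggling an \emph{$H$-alternating trail}. This is a trail $P=x_0x_1\cdots x_m$ in $G$ whose edges alternate between $E(H)$ and $E(G)\setminus E(H)$. Replacing $E(H)$ by $E(H)\triangle E(P)$ leaves the $H$-degree of every interior vertex unchanged. Only the two ends change, each by $\pm1$, and the sign at each end is determined by whether the end edge lies in $H$ or not. A closed alternating trail of odd length instead changes one vertex by $\pm2$.

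Suppose class $a$ is overfull and class $b$ is underfull. Then there is an adjacent pair of indices along the chain $0,1,2,3$ where the surplus can be pushed one step toward the deficit. We therefore want an alternating trail starting at a vertex of the overfull class that moves it one step in the right direction. The trail must end at a vertex whose own shift does not increase $\Phi$: either it moves into a deficient class, or it moves between two classes in a way that is neutral. Minimality of $\Phi$ forbids all such trails. The plan is to grow the set $R$ of vertices reachable from the overfull class by alternating trails, in the style of an Edmonds/Gallai--Edmonds search. Forbidding good endpoints then forces every vertex of $R$ to have a rigid local pattern of $H$-edges and non-$H$-edges.

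The main obstacle is the final structural step. We must show that this rigidity, combined with $3$-regularity and the counts $n_i(H)$, forces $R$ (and hence, for a connected $G$, all of $G$) to be tiny and highly symmetric, namely $K_4$ or $K_{3,3}$. I would first work out small parity constraints. For example, in $K_4$ with $n=4$ we would need all four degrees $0,1,2,3$, which is impossible because a vertex of degree $3$ and a vertex of degree $0$ cannot coexist in a spanning subgraph of $K_4$. I would then try to show that any other rigid configuration contains an odd closed alternating trail giving the needed $\pm2$ shift.

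Finally, disconnected graphs need a separate combination step. Floors and ceilings do not add over components, so we would need a slightly stronger statement per component: some flexibility in which classes receive the rounding up. This is also why $3K_4$ appears as an exception. Three copies of $K_4$ cannot absorb the rounding errors, while $K_4$ or $2K_4$ together with other components can.
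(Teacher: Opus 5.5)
The paper does not prove this statement. It quotes it from \cite{LPS} and uses it only as a black box, so your outline has to stand on its own. As written it is a plan rather than a proof, and it has genuine gaps.

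The step you call the main obstacle is the entire content of the theorem, and none of it is argued. That step has three parts: the absence of good alternating trails forces a rigid local pattern on the reachable set $R$; this pattern forces $R$, and then all of a connected $G$, to be $K_4$ or $K_{3,3}$; and every other rigid configuration contains a suitable odd closed alternating trail.

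The disconnected case is likewise left at ``we would need a slightly stronger statement per component,'' without saying what that statement is. It would have to record which count vectors each component can realize, including components isomorphic to $K_4$ or $K_{3,3}$, since $2K_4$ and $K_4\cup K_{3,3}$ are not exceptions.

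Your explanation of $3K_4$ is also off: for $n=12$ there is no rounding at all. The actual obstruction is that a spanning subgraph of $K_4$ never has both a vertex of degree $0$ and one of degree $3$. It also never has exactly three vertices of degree $0$ (or of degree $3$). Hence three vertices of each extreme degree would need at least four components.

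There is also a concrete failure in the mechanism itself. One toggle moves two vertices by one degree class each, or one vertex by two classes. Let $n=4m$ and $(n_0,n_1,n_2,n_3)=(m+2,m,m,m-2)$. This satisfies the parity condition $n_1+n_3\equiv0\pmod2$ forced by $\sum_i i\,n_i(H)=2|E(H)|$, and $\Phi=4$. Moving a vertex up out of class $0$ only creates an excess in class $1$, and moving one up into class $3$ only creates a deficit in class $2$. Checking all combinations, no toggle brings $\Phi$ below $4$.

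So your claim that an unbalanced minimizer admits a local modification strictly decreasing $\Phi$ fails here, independently of the structure of $G$. The surplus can be pushed one step, but pushing it does not decrease $\Phi$. All progress at such vectors must then come from the tie-breaker, and $|E(H)|$ does not measure it. The $\Phi$-neutral toggles that make progress from this vector raise the degrees of their ends, so they increase $|E(H)|$, and minimality of $|E(H)|$ does not exclude them.

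You would need a potential that strictly rewards such moves, for instance $\sum_i(n_i-n/4)^2$. Under it, moving one vertex from class $i$ to class $j$ changes the value by $2(n_j-n_i)+2$. But then $\Phi$-neutral trails become admissible in the search, and the rigidity analysis of $R$ would have to be redone in that weaker setting.
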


	The next lemma converts a spanning subgraph with small degree classes into a vertex irregular total labeling.

	\begin{lemma}\label{lem:cubic-extension}
		Let $G$ be a cubic graph and $s\ge2$ be an integer.  If $G$ has a spanning subgraph $H$ with $n_i(H)\le s-1$ for each $i\in[0,3]$, then $G$ has a vertex irregular total $s$-labeling in which every edge receives a label in $\{1,s\}$.
	\end{lemma}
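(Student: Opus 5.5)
The plan is to use the spanning subgraph $H$ to decide the edge labels and then use the vertex labels to separate vertices within each degree class of $H$. Concretely, define $\lambda(e)=s$ for every $e\in E(H)$ and $\lambda(e)=1$ for every $e\in E(G)\setminus E(H)$. Every edge label then lies in $\{1,s\}\subseteq[1,s]$, as the statement requires.

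First I compute the edge contribution to each weight. A vertex $v$ with $d_H(v)=i$ is incident with $i$ edges labeled $s$ and $3-i$ edges labeled $1$, since $G$ is cubic. So its edge-label sum is $is+(3-i)=i(s-1)+3$, which depends only on $i$.

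Next I assign vertex labels. For each $i\in[0,3]$, list the vertices of $V_i(H)$ as $v_1,\dots,v_{n_i(H)}$ and set $\lambda(v_j)=j$. Since $n_i(H)\le s-1$, these labels lie in $[1,s-1]\subseteq[1,s]$, so $\lambda$ is a total $s$-labeling. The weights in class $i$ are then
\[
\wt_\lambda(v_j)=i(s-1)+3+j,\qquad j\in[1,n_i(H)],
\]
which are pairwise distinct within the class. They all lie in the interval $I_i=[\,i(s-1)+4,\;(i+1)(s-1)+3\,]$, which has exactly $s-1$ integers.

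Finally I check that vertices in different classes get different weights. The intervals $I_0,I_1,I_2,I_3$ are consecutive and pairwise disjoint, because the right endpoint of $I_i$ is one less than the left endpoint $(i+1)(s-1)+4$ of $I_{i+1}$. Hence all weights are distinct and $\lambda$ is vertex irregular.

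The only delicate point is the overlap between classes. If vertex labels in $[1,s]$ were allowed, class $i$ could reach weight $i(s-1)+3+s$, which equals the smallest possible weight $(i+1)(s-1)+4$ of class $i+1$. The hypothesis $n_i(H)\le s-1$ is exactly what lets us avoid the label $s$ on vertices and so keep the classes apart.
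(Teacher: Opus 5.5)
Your proposal is correct and is essentially the paper's proof: the paper labels the edges of $H$ by $s$ and the other edges by $1$, gives the vertices of each class $V_i(H)$ the labels $1,\ldots,n_i(H)$, and observes that the resulting weights lie in the pairwise disjoint intervals $[4+i(s-1),\,3+(i+1)(s-1)]$. Your closing remark, that allowing vertex label $s$ would make the classes touch at $(i+1)(s-1)+4$, correctly explains why the hypothesis $n_i(H)\le s-1$ is needed.
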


	\begin{proof}
		Define $\lambda$ on $E(G)$ by letting $\lambda(e)=s$ if $e\in E(H)$ and $\lambda(e)=1$ otherwise.  For each $i\in[0,3]$, let $V_i(H)=\{v_{i,1},\ldots,v_{i,n_i(H)}\}$ and let $\lambda(v_{i,j})=j$ for each $j\in[1,n_i(H)]$.  A vertex $v_{i,j}$ is incident with $i$ edges of $H$ and $3-i$ edges of $G-E(H)$, so $\wt_{\lambda}(v_{i,j})=is+(3-i)+j=3+i(s-1)+j$.  For a fixed $i$, these weights are distinct and, as $j\le n_i(H)\le s-1$, they lie in $[4+i(s-1),\,3+(i+1)(s-1)]$.  These four intervals, for $i\in[0,3]$, are pairwise disjoint.  Hence $\lambda$ is vertex irregular, as desired.
	\end{proof}

	\begin{figure}[htbp]
		\centering
		\begin{minipage}[t]{.44\textwidth}
			\centering
			\textbf{(a)} $K_4$, $s=2$\par\medskip
			\begin{tikzpicture}
				\coordinate (a) at (-.95,.95);
				\coordinate (b) at (.95,.95);
				\coordinate (c) at (.95,-.95);
				\coordinate (d) at (-.95,-.95);
				\foreach \u/\v in {a/c,a/d,b/d,c/d}
				\draw[tvedge] (\u)--(\v);
				\draw[tvheavy] (a)--(b)--(c);
				\tvvertex{a}{1}{5}{135}
				\tvvertex{b}{2}{7}{45}
				\tvvertex{c}{2}{6}{-45}
				\tvvertex{d}{1}{4}{-135}
			\end{tikzpicture}
		\end{minipage}\hfill
		\begin{minipage}[t]{.52\textwidth}
			\centering
			\textbf{(b)} $K_{3,3}$, $s=3$\par\medskip
			\begin{tikzpicture}
				\coordinate (a1) at (0,1.65);
				\coordinate (a2) at (1.6,1.65);
				\coordinate (a3) at (3.2,1.65);
				\coordinate (b1) at (0,0);
				\coordinate (b2) at (1.6,0);
				\coordinate (b3) at (3.2,0);
				\foreach \u/\v in {a1/b2,a1/b3,a2/b3,a3/b1,a3/b2,a3/b3}
				\draw[tvedge] (\u)--(\v);
				\draw[tvheavy] (a1)--(b1)--(a2)--(b2);
				\tvvertex{a1}{1}{6}{90}
				\tvvertex{a2}{1}{8}{90}
				\tvvertex{a3}{1}{4}{90}
				\tvvertex{b1}{2}{9}{-90}
				\tvvertex{b2}{2}{7}{-90}
				\tvvertex{b3}{2}{5}{-90}
			\end{tikzpicture}
		\end{minipage}
		\par\bigskip
		\textbf{(c)} $3K_4$, $s=4$\par\medskip
		\begin{tikzpicture}
			\begin{scope}[xshift=0cm]
				\coordinate (a) at (-.9,.9);
				\coordinate (b) at (.9,.9);
				\coordinate (c) at (.9,-.9);
				\coordinate (d) at (-.9,-.9);
				\foreach \u/\v in {a/b,a/c,a/d,b/c,b/d,c/d}
				\draw[tvedge] (\u)--(\v);
				\tvvertex{a}{1}{4}{135}
				\tvvertex{b}{2}{5}{45}
				\tvvertex{c}{3}{6}{-45}
				\tvvertex{d}{4}{7}{-135}
			\end{scope}
			\begin{scope}[xshift=4.1cm]
				\coordinate (a) at (-.9,.9);
				\coordinate (b) at (.9,.9);
				\coordinate (c) at (.9,-.9);
				\coordinate (d) at (-.9,-.9);
				\foreach \u/\v in {a/c,a/d,b/d}
				\draw[tvedge] (\u)--(\v);
				\draw[tvheavy] (a)--(b)--(c)--(d);
				\tvvertex{a}{2}{8}{135}
				\tvvertex{b}{1}{10}{45}
				\tvvertex{c}{2}{11}{-45}
				\tvvertex{d}{3}{9}{-135}
			\end{scope}
			\begin{scope}[xshift=8.2cm]
				\coordinate (a) at (-.9,.9);
				\coordinate (b) at (.9,.9);
				\coordinate (c) at (.9,-.9);
				\coordinate (d) at (-.9,-.9);
				\foreach \u/\v in {a/b,a/c,a/d,b/c,b/d,c/d}
				\draw[tvheavy] (\u)--(\v);
				\tvvertex{a}{1}{13}{135}
				\tvvertex{b}{2}{14}{45}
				\tvvertex{c}{3}{15}{-45}
				\tvvertex{d}{4}{16}{-135}
			\end{scope}
		\end{tikzpicture}
		\caption{Vertex irregular total labelings of the three exceptional cubic graphs. A number inside a vertex is its label, and the bracketed number beside it is its weight. Thick edges receive label $s$, with $s$ as specified in each panel; thin edges receive label $1$.}
		\label{fig:cubic-exceptions}
	\end{figure}
	
	\begin{proof}[Proof of Theorem~\ref{thm:cubic}]
		The lower bound is~\eqref{eq:regular-lower}.
		Since $n$ is even, we have $s=\ceil{\frac{n+3}{4}}=\ceil{\frac n4}+1$.

		Suppose first that $G\not\cong K_4,K_{3,3},3K_4$.  By Theorem~\ref{thm:LPS}, $G$ has a spanning subgraph $H$ with $n_i(H)\le\ceil{\frac n4}=s-1$ for each $i\in[0,3]$, and Lemma~\ref{lem:cubic-extension} gives the desired labeling.

		It remains to consider the three exceptional graphs, whose labelings are illustrated in Figure~\ref{fig:cubic-exceptions}.  Let $G=K_4$ with $V(G)=\{a,b,c,d\}$, so $s=2$.  Label the two adjacent edges $ab$ and $bc$ by $2$ and every other edge by $1$, and let $\lambda(a)=\lambda(d)=1$ and $\lambda(b)=\lambda(c)=2$.  Then $a,b,c,d$ have weights $5,7,6,4$, respectively.

		Let $G=K_{3,3}$, so $s=3$.  Let $H$ be a path on four vertices together with two isolated vertices.  Then $(n_0(H),n_1(H),n_2(H),n_3(H))=(2,2,2,0)$, and Lemma~\ref{lem:cubic-extension} applies.

		Finally, let $G=3K_4$, so $s=4$.  On the first copy of $K_4$ let $H$ be empty, on the second let $H$ be a spanning path, and on the third let $H$ be the whole copy.  Label the edges of $H$ by $4$ and all other edges by $1$.  On the first and the third copies, label the vertices by $1,2,3,4$.  On the second copy, label the two ends of the path by $2$ and $3$ and its two internal vertices by $1$ and $2$.  The three copies then have weight sets $[4,7]$, $[8,11]$, and $[13,16]$, respectively, so the labeling is vertex irregular.

		In each of the three cases, every edge receives label $1$ or $s$.  This completes the proof of Theorem~\ref{thm:cubic}.
	\end{proof}

	\section{Proof of Theorem~\ref{thm:four-main}}\label{sec:four}

	Throughout this section, $G$ is a $4$-regular graph on $n$ vertices, $s=\ceil{\frac{n+4}{5}}$, and $r=s-1=\ceil{\frac{n-1}{5}}$.  By the definition of $r$, we have
	\begin{equation}\label{eq:n-range}
		5r-3\le n\le 5r+1.
	\end{equation}
	The lower bound $\tvs(G)\ge s$ is~\eqref{eq:regular-lower}, so it suffices to construct a vertex irregular total $s$-labeling $\lambda$ of $G$ whose set of vertex weights is exactly $[5,n+4]$.

	We first outline the proof. Using a matching decomposition of Dalal, McDonald, and Shan~\cite{DMS}, Lemma~\ref{lem:smoothing} constructs a maximum matching $M$ and chooses the vertices it leaves uncovered so that each uncovered vertex can be \emph{smoothed}: we delete the vertex and pair its four neighbors by two \emph{virtual edges}. The pairings are chosen so that smoothing all uncovered vertices in $G-M$ gives a simple cubic graph $F$.

	Let $U$ be the set of uncovered vertices and $N=|V(F)|$. In the main case, Theorem~\ref{thm:LPS} gives a spanning subgraph $H$ of $F$ whose degree classes $V_0,V_1,V_2,V_3$ have sizes $a,b,a,b$, where $a=\floor{N/4}$ and $b=\ceil{N/4}$. We label the edges of $H$ by $r+1$ and the remaining edges of $F$ by $1$. Thus each vertex in $V_i$ receives a contribution of $3+ir$ from its incident edges of $F$.

	For $w\in U$, let $h(w)$ be the number of its two virtual edges that belong to $H$, and let $U_h=\{w\in U:h(w)=h\}$ for $h\in\{0,1,2\}$. When $w$ is restored, each virtual edge transfers its label to the two corresponding edges incident with $w$. These four edge labels sum to $4+2h(w)r$. We reserve three intervals of weights for $U_0,U_1,U_2$ and assign the remaining weights to $V_0,V_1,V_2,V_3$.

	More precisely, a vertex with target weight $5+j$ is assigned \emph{rank} $j$. We assign consecutive blocks of ranks to
	\[
	U_0,V_0,V_1,U_1,V_2,V_3,U_2
	\]
	in this order. For $v\in V_i$ with rank $R(v)$, let $t(v)=R(v)-ir$. Its vertex label and matching edge label must sum to $2+t(v)$. We show that $0\le t(v)\le2r$ for every $v\in V(F)$, and order the vertices within each $V_i$ so that $|t(u)-t(v)|\le r$ for every $uv\in M$. These inequalities allow both ends of each matching edge to attain their target weights using labels in $[1,r+1]$. The required ordering is given in Claim~\ref{clm:ordering}. We then assign the remaining labels and restore the vertices of $U$.

	This construction applies when $N\ge4r$ and $F\not\cong K_4,K_{3,3},3K_4$. If $N<4r$, then $G=kK_5\cup J_7$ for some $4$-regular graph $J_7$ on seven vertices. We treat this case in Section~\ref{subsec:small} and the three exceptional cubic graphs in Section~\ref{subsec:exceptional}.

	\subsection{The smoothing lemma}\label{subsec:smoothing}

	For a graph $J$, let $\alpha'(J)$ denote its maximum matching size and write $\defect(J)=|V(J)|-2\alpha'(J)$. A graph of odd order is \emph{factor-critical} if deleting any vertex leaves a graph with a perfect matching. Let $o(J)$ denote the number of odd components of $J$. A set $S\subseteq V(J)$ is a \emph{barrier} if $o(J-S)-|S|=\defect(J)$; a \emph{maximum barrier} is a barrier of maximum cardinality. The existence of a barrier follows from the Tutte--Berge formula; see Lov\'asz and Plummer~\cite{LP}. For $X\subseteq V(J)$, let $\partial X$ be the set of edges of $J$ with exactly one end in $X$, and write $\partial B=\partial V(B)$ for a subgraph $B$ of $J$. Finally, $K_5-e$ denotes the graph obtained from $K_5$ by deleting one edge.

	We use the following matching decomposition. Every maximum barrier is inclusionwise maximal, so the result applies to our choice of barrier below.

	\begin{externallemma}[Dalal, McDonald, and Shan~{\cite[Lemma~7]{DMS}}]
		Let $J$ be a graph and let $S$ be an inclusionwise maximal barrier of $J$. Define a bipartite multigraph $I$ with parts $S$ and the set of components of $J-S$, with one edge between $v\in S$ and a component $B$ for each edge of $J$ joining $v$ to $B$. Then the following hold.
		\begin{enumerate}[label=\textup{(\roman*)}]
			\item Every component of $J-S$ is factor-critical.
			\item $I$ has a matching that saturates $S$.
			\item If $S\ne\emptyset$ and $d=\max\{d_I(v):v\in S\}$, then the matching in \textup{(ii)} can be chosen to saturate every vertex of $V(I)\setminus S$ having degree at least $d$ in $I$.
		\end{enumerate}
	\end{externallemma}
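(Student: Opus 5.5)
The three parts are proved separately: (i) and (ii) by the usual extremality arguments for barriers together with the Tutte--Berge formula, and (iii) by combining Hall's theorem with the Mendelsohn--Dulmage theorem. Throughout I use the Tutte--Berge formula in the form $o(J-T)-|T|\le\defect(J)$ for every $T\subseteq V(J)$, with equality exactly when $T$ is a barrier.

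For (i), I first exclude even components. If $C$ is an even component of $J-S$ and $x\in V(C)$, then $C-x$ has odd order and so has an odd component. Hence $S\cup\{x\}$ has $o(J-(S\cup\{x\}))-|S\cup\{x\}|\ge\defect(J)$, so it is a larger barrier, contradicting maximality. Next let $B$ be an odd component that is not factor-critical. Then some $v\in V(B)$ has $B-v$ with no perfect matching. By Tutte's theorem there is $T\subseteq V(B)\setminus\{v\}$ with $o(B-v-T)>|T|$. Since $B-v$ has even order, parity gives $o(B-v-T)\ge|T|+2$. Set $S'=S\cup\{v\}\cup T$. Then $o(J-S')\ge o(J-S)-1+|T|+2$ and $|S'|=|S|+|T|+1$, so the deficiency is at least $\defect(J)$. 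Thus $S'$ is a barrier strictly containing $S$, again a contradiction.

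For (ii), I verify Hall's condition for $S$ in $I$. Suppose $X\subseteq S$ has $|N_I(X)|<|X|$, and consider $S\setminus X$. Every component of $J-S$ not in $N_I(X)$ remains a component of $J-(S\setminus X)$, because its only neighbours outside it lie in $S\setminus X$. Hence $o(J-(S\setminus X))\ge o(J-S)-|N_I(X)|$. This gives
\[
o(J-(S\setminus X))-|S\setminus X|\ge \defect(J)+|X|-|N_I(X)|>\defect(J),
\]
which contradicts Tutte--Berge.

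For (iii), let $D$ be the set of components $B$ with $d_I(B)\ge d$. I would first show that $I$ has a matching saturating $D$, again via Hall. For $Y\subseteq D$, at least $d|Y|$ edges of $I$ leave $Y$. Every vertex of $S$ has degree at most $d$ in $I$, so $N_I(Y)$ absorbs at most $d|N_I(Y)|$ of these edges. Hence $|N_I(Y)|\ge|Y|$. Now there is a matching $M_1$ saturating $S$ (from (ii)) and a matching $M_2$ saturating $D$. By the Mendelsohn--Dulmage theorem, $I$ has a matching covering every vertex of $S$ covered by $M_1$ and every vertex of $D$ covered by $M_2$, which is what (iii) asserts. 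Multiple edges of $I$ do not affect any of these statements.

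I expect the main care to be needed in (iii). If one wants to avoid quoting Mendelsohn--Dulmage, the substitute is to take $M_1\cup M_2$ and choose edges along each alternating path or cycle, which is routine but fiddly.
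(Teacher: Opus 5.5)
The paper gives no proof of this lemma. It is quoted from Dalal, McDonald, and Shan and used as a black box in the proof of Lemma~\ref{lem:smoothing}, so your proposal has to stand on its own. It does: it is a correct, self-contained proof.

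\textbf{Part (i).} Both the even-component case and the non-factor-critical case produce a strictly larger set $S'$ with $o(J-S')-|S'|\ge\defect(J)$. By Tutte--Berge, $S'$ is then a barrier, which contradicts the maximality of $S$. The parity bound $o(B-v-T)\ge|T|+2$ is exactly what compensates for losing $B$ and adding $v$.

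\textbf{Part (ii).} Your Hall argument is right, and it never uses maximality. So (ii) holds for every barrier; it is the familiar fact that every maximum matching matches $S$ into distinct odd components. Maximality enters only in (i).

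\textbf{Part (iii).} One small point should be made explicit. Your double count gives $d|Y|\le d|N_I(Y)|$, and you need $d\ge1$ to conclude $|Y|\le|N_I(Y)|$. This holds by (ii): $S\ne\emptyset$ is saturated, so every vertex of $S$ has positive degree in $I$. With that, Mendelsohn--Dulmage applied to a matching saturating $S$ and one saturating $D$ gives the required matching. Multiple edges of $I$ cause no trouble, since the count uses multidegrees while Hall's condition concerns only neighbour sets.

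Compared with the citation, your route makes the smoothing lemma rest only on Tutte's theorem, the Tutte--Berge formula, Hall's theorem, and Mendelsohn--Dulmage. It costs under a page and removes the external dependency.
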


	\begin{lemma}\label{lem:smoothing}
		Let $G$ be a $4$-regular graph on $n$ vertices. Then $G$ has a maximum matching $M$ such that, with $U$ denoting the set of vertices not covered by $M$, the following hold.
		\begin{enumerate}[label=\textup{(\roman*)},ref=\textup{(\roman*)}]
			\item\label{it:smooth-q} $|U|\le n/5$.
			\item\label{it:smooth-def} If $J$ is a component of $G$ that has no perfect matching and is not factor-critical, then
			\begin{equation}\label{eq:11def}
				|V(J)|\ge 11\defect(J).
			\end{equation}
			\item\label{it:smooth-F} For every $w\in U$, the four neighbors of $w$ can be partitioned into two pairs $\{x,y\}$ and $\{x',y'\}$ such that the graph $F$ obtained from $G-U-M$ by adding, for every $w\in U$, the two \emph{virtual edges} $xy$ and $x'y'$ is a simple cubic graph.
		\end{enumerate}
	\end{lemma}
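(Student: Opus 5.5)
We work componentwise: pick a maximum matching in each component $J$ of $G$ and take $M$ to be their union. Then $|U|=\sum_J \defect(J)$. A component with a perfect matching contributes nothing to $U$.

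\emph{Factor-critical components.} A factor-critical component $J$ has odd order, hence at least $5$ vertices, and $\defect(J)=1$. For such $J$ we are free to choose which vertex $w$ stays uncovered, since $J-w$ has a perfect matching for every $w$.

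\emph{Other deficient components.} For a component $J$ with no perfect matching that is not factor-critical, we take a maximum barrier $S$ of $J$. Then $S\neq\emptyset$: if $S=\emptyset$ is a barrier, then $\defect(J)=o(J)\le 1$, and $\defect(J)=1$ with $J$ connected would force $J$ to be factor-critical by maximality, which is excluded. By the Dalal--McDonald--Shan lemma, every component of $J-S$ is factor-critical, and the bipartite multigraph $I$ has a matching saturating $S$.

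Each odd component $B$ of $J-S$ satisfies $|\partial B|\equiv 4|V(B)|\equiv 0\pmod 2$ (parity of degree sums), and $|\partial B|\ge 2$ since $J$ is connected. If $|\partial B|=2$, then $4|V(B)|-2=2|E(B)|\le |V(B)|(|V(B)|-1)$, which forces $|V(B)|\ge 5$. In fact $|V(B)|=5$ is impossible, because $B$ would be $K_5-e$ with degree sequence not matching; one checks this gives $|V(B)|\ge 7$. Similarly, $|\partial B|=4$ forces $|V(B)|\ge 5$.

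Counting edges, $\sum_B|\partial B|\le 4|S|$, while $o(J-S)=|S|+\defect(J)$. Hence many odd components have $|\partial B|=2$ and therefore have $\ge 7$ vertices. Summing vertex counts gives $|V(J)|\ge |S|+\sum_B|V(B)|$, which I expect to be at least $11\defect(J)$ after optimizing. This is (ii), and it also gives (i), since then $\defect(J)\le|V(J)|/11\le|V(J)|/5$.

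\emph{Choosing $M$ in these components.} We build $M$ by taking the $I$-matching from the lemma (saturating $S$) and extending it by near-perfect matchings inside each factor-critical $B$. The matched component misses nothing. Each unmatched $B$ has exactly one uncovered vertex, and we may choose that vertex freely within $B$. For (i), note that a factor-critical component has $\ge 5$ vertices and deficiency $1$.

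\emph{Part (iii): pairing the neighbors.} For each $w\in U$, $G-M$ is $4$-regular at $w$, since $w$ is uncovered; its neighbors $x$ with $x\notin U$ have degree $3$ in $G-M$ plus one edge to $w$. Deleting $U$ and adding the virtual edges restores degree $3$ at each vertex, provided that
\begin{itemize}
\item no neighbor of $w$ lies in $U$, and
\item the pairings create neither loops nor parallel edges, nor edges already present in $G-M$.
\end{itemize}
So we need to choose the uncovered vertices with disjoint closed neighborhoods that are independent. This holds across different unmatched $B$'s as long as the chosen vertices are not adjacent to each other. Inside a factor-critical $B$ of order $\ge 5$, we pick $w$ (with its $4$ neighbors) so that some pairing $\{x,y\},\{x',y'\}$ avoids existing edges of $G-M$ and avoids edges already created.

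A neighborhood of $4$ vertices has $3$ perfect pairings. A pairing fails only if one of its pairs is an edge of $G-M$ or duplicates another virtual edge. Since the $4$ neighbors induce at most $6$ edges, the failure occurs only when the neighborhood is nearly complete. The bad case is when the neighborhood induces $K_4$ in $G-M$, i.e.\ $G[N[w]]=K_5$, which would be a whole component $K_5$. But $K_5$ is factor-critical, and there every vertex is bad. I expect to handle this by also using the $M$-edges: choose $M$ inside $K_5$ so that the pairs of neighbors of $w$ are exactly the two $M$-edges. Those edges are removed in $G-M$, so re-adding them as virtual edges keeps $F$ simple. This trick, using $M$-edges among $N(w)$ as virtual edges, should resolve the dense cases generally.

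\emph{Main obstacle.} The main obstacle is the simultaneous choice for adjacent or nearby uncovered vertices in barrier-type components, ensuring that no two uncovered vertices are adjacent and that no duplicate virtual edges arise. I would choose the uncovered vertices of distinct $B$'s sequentially, using that each $B$ has $\ge 5$ vertices and few boundary edges, so that some vertex of $B$ is nonadjacent to $S$ and hence interior.
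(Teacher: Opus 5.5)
\textbf{The gap is in (iii).} You claim that a pairing of $N(w)$ can fail only when $N(w)$ is nearly complete, so that $G[N[w]]=K_5$ is the only bad case. This is false. Every neighbor $x$ of $w$ is covered by $M$ (independence of $U$ is automatic, since $M$ is maximum). So $x$ has at most two neighbors inside $N(w)$ in $R=G-U-M$, and the complement of $R[N(w)]$ has minimum degree at least one. All three pairings are therefore blocked exactly when this complement is $K_{1,3}$, i.e.\ when $R[N(w)]$ contains a triangle. Together with $w$, that triangle is a $K_4$ of $G$ containing no edge of $M$. Three edges suffice for this, and it happens far from any $K_5$ component, for instance when $w$ lies in a $K_4$ whose other three vertices are matched outside it.

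So you must either take $w$ in no $K_4$ (then the complement has a perfect matching), or, when every vertex of $B$ lies in a $K_4$, control $M$ inside $B$. Your idea of reusing $M$-edges as virtual edges is the right one in the second situation. However, you give no reason why $B-w$ has a perfect matching containing the edges you need. The paper supplies this as follows:
\begin{enumerate}[label=\textup{(\alph*)}]
\item Since $|\partial B|\le 2$, every $K_4$ meeting $B$ lies in $B$.
\item Two $K_4$'s that meet share three vertices, so unless $B=K_5$ the vertex set of $B$ splits into clusters inducing $K_4$ or $K_5-e$.
\item Odd order forces a cluster $Q=K_5-pq$ on $\{p,q,x,y,w\}$.
\item A perfect matching of $B-p$ restricts to one of $B-Q$, because using the edge leaving $Q$ at $q$ would force $x,y,w$ to be matched among themselves.
\end{enumerate}
Hence one may put $px,qy\in M$ and prescribe them as the virtual edges at $w$.

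\textbf{The cross-component part is also not secured.} Disjoint closed neighborhoods are not needed; what must be excluded is two uncovered vertices producing the same virtual edge. Your plan to take $w$ with no neighbor in $S$ presupposes that unmatched components have few boundary edges. But you only use some matching of $I$ saturating $S$, so an unmatched component could be a single vertex whose four neighbors all lie in $S$. You need part (iii) of the Dalal--McDonald--Shan lemma: since vertices of $S$ have $I$-degree at most $4$, the matching can be chosen to saturate every component with at least four boundary edges. Then every unmatched component has $0$ or $2$ boundary edges. Even so, a vertex with no neighbor in $S$ may be forced into a $K_4$. The paper therefore uses a weaker condition: each virtual edge at $w$ has an end in $B-w$. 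When both boundary edges are at $w$, it pairs each outside neighbor with an inside one.

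\textbf{Two side claims in your (ii) are wrong.} First, $|V(B)|=5$ with $|\partial B|=2$ does occur: it is exactly $K_5-e$, whose two degree-$3$ vertices send one edge each to $S$, the very configuration used above. Second, $|\partial B|=4$ allows $|V(B)|=1$. The conclusion survives with the correct, weaker bounds. Let $\ell$ and $m$ be the numbers of components of $J-S$ with exactly two and at least four boundary edges. Then $2\ell+4m\le 4|S|=4(\ell+m-\defect(J))$ gives $\ell\ge 2\defect(J)$, and hence $|V(J)|\ge |S|+5\ell+m=6\ell+2m-\defect(J)\ge 11\defect(J)$. Summing $\defect(J)$ over components then yields (i), which is a valid alternative to the paper's global count.
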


	\begin{proof}
		Let $S$ be a maximum barrier of $G$. We first observe that every odd component $B$ of $G-S$ with exactly two edges to $S$ has at least five vertices. Indeed,
		\[
		|\partial B|=4|V(B)|-2|E(B)|,
		\]
		so a singleton has four boundary edges, while a set of three vertices has at least six boundary edges because $G$ is simple. An odd component with no edges to $S$ is itself a $4$-regular component of $G$ and also has at least five vertices. The same identity shows that every component of $G-S$ has an even number of boundary edges.

		Apply the matching decomposition above with $J=G$, and let $I$ be its bipartite multigraph. Every component of $G-S$ is factor-critical. If $S\ne\emptyset$, each vertex of $S$ has degree at most four in $I$, so there is a matching $L$ that saturates $S$ and every vertex corresponding to a component with at least four boundary edges. If $S=\emptyset$, take $L=\emptyset$; in this case every component has boundary zero.

		Let $c_0$ and $c_2$ be the numbers of components of $G-S$ with zero and two boundary edges, respectively. Since all components of $G-S$ have odd order, exactly $o(G-S)-|S|=\defect(G)$ of them are not covered by $L$. Each of these has zero or two boundary edges. By the preceding observation, the $c_0+c_2$ components counted here are vertex-disjoint and each has at least five vertices. Hence
		\[
		\defect(G)\le c_0+c_2\le \frac{n}{5}.
		\]

		Lift each edge of $L$ to its corresponding edge of $G$. If such an edge meets a component $B$ of $G-S$ at $v$, add a perfect matching of $B-v$. In each component not covered by $L$, choose one vertex to leave uncovered and match the remaining vertices internally. These choices are possible because every component of $G-S$ is factor-critical. The resulting matching $M$ misses exactly $\defect(G)$ vertices and is therefore maximum. Thus its uncovered set $U$ satisfies $|U|=\defect(G)\le n/5$, proving~\ref{it:smooth-q}. We specify the uncovered vertices and the internal matchings in the unmatched components below.

		To prove~\ref{it:smooth-def}, let $J$ be a component of $G$ that has no perfect matching and is not factor-critical, and put $S_J=S\cap V(J)$. The Tutte--Berge formula and additivity of deficiency over components imply that $S_J$ is a barrier of $J$. Moreover, $S_J\ne\emptyset$, since otherwise $J$ would be a component of $G-S$ and hence factor-critical. Every component of $J-S_J$ therefore has a positive even number of boundary edges. Let $\ell$ and $m$ be the numbers of these components with exactly two and at least four boundary edges, respectively, and put
		\[
		q_J=\defect(J)=\ell+m-|S_J|.
		\]
		Counting edges from $J-S_J$ to $S_J$ gives
		\[
		2\ell+4m\le 4|S_J|=4(\ell+m-q_J),
		\]
		and hence $\ell\ge 2q_J$.

		Each of the $\ell$ components with two boundary edges has at least five vertices by the initial observation. Consequently,
		\[
		\begin{aligned}
			|V(J)|
			&\ge |S_J|+5\ell+m\\
			&=6\ell+2m-q_J\\
			&\ge 11q_J.
		\end{aligned}
		\]
		This proves~\ref{it:smooth-def}.

		It remains to specify the uncovered vertices and their virtual edges. Let $B$ be a component of $G-S$ not covered by $L$. Then $B$ is factor-critical and $|\partial B|\in\{0,2\}$. If $|\partial B|=0$, then $B$ is a component of $G$.
		In choosing the uncovered vertex $w$ of $B$ and the two virtual edges at $w$, we shall ensure the following \emph{auxiliary condition}: each virtual edge at $w$ has at least one end in $B-w$. This condition will be used below to show that $F$ is simple.

		\medskip
		\noindent\textbf{Case 1.} \emph{Some vertex $w$ of $B$ lies in no copy of $K_4$ in $G$.}

		\medskip
		Leave $w$ uncovered and choose any perfect matching of $B-w$. We choose the virtual edges at these vertices after all internal matchings have been fixed.

		\medskip
		\noindent\textbf{Case 2.} \emph{Every vertex of $B$ lies in a copy of $K_4$ in $G$.}

		\medskip
		Every copy of $K_4$ meeting $B$ lies entirely in $B$. This is immediate when $B$ is a component of $G$. Otherwise, $|\partial B|=2$, whereas a copy of $K_4$ with exactly $k\in\{1,2,3\}$ vertices in $B$ would contribute $k(4-k)\ge3$ edges to $\partial B$.

		Two distinct copies of $K_4$ cannot share exactly one or two vertices, since a shared vertex would then have degree at least five. If they share three vertices, their union induces a $K_5-e$ or a $K_5$. In the latter case, this $K_5$ is a component of $G$, so $B=K_5$. In the former case, the three shared vertices already have degree four within the union, and each of the other two vertices has only one neighbor outside it. Thus no further copy of $K_4$ meets the union. It follows that, unless $B=K_5$, the vertices of $B$ partition into clusters, each inducing a $K_4$ or a $K_5-e$.

		If $B=K_5$, leave any vertex $w$ uncovered, choose a perfect matching of $B-w$, and prescribe its two edges as the virtual edges at $w$.

		Otherwise, since $|V(B)|$ is odd, some cluster induces a $K_5-e$. Write
		\[
		Q=G[\{p,q,x,y,w\}]=K_5-pq.
		\]
		Only $p$ and $q$ have neighbors outside $Q$. Since $B$ is factor-critical, $B-p$ has a perfect matching. This matching uses no edge between $Q-p$ and $B-Q$: the only possible such edge is incident with $q$, and using it would leave the three vertices $x,y,w$ to be matched among themselves. Hence $B-Q$ has a perfect matching. Choose one, add $px$ and $qy$, leave $w$ uncovered, and prescribe $px$ and $qy$ as the virtual edges at $w$.

		\medskip
		Having made these choices in every component $B$ of $G-S$ not covered by $L$, let $M$ be the resulting maximum matching, let $U$ be its uncovered set, and put $R=G-U-M$. The virtual edges prescribed in Case~2 belong to $M$ and are therefore absent from $R$.

		Now consider a vertex $w$ chosen in Case~1. Since $M$ is maximum, $U$ is independent, so every neighbor of $w$ is covered by $M$. Each such neighbor loses its matching edge and its edge to $w$ when passing from $G$ to $R$. Thus
		\[
		\Delta(R[N_G(w)])\le2,
		\]
		and the complement of $R[N_G(w)]$ has minimum degree at least one. A graph on four vertices with minimum degree at least one and no perfect matching is a $K_{1,3}$. If this complement were a $K_{1,3}$, then $R[N_G(w)]$ would contain a triangle, which together with $w$ would form a $K_4$ in $G$. This contradicts the choice of $w$. Hence the complement has a perfect matching, and we take the virtual edges at $w$ to be the two edges of a perfect matching of the complement, chosen as follows.

		Since $|\partial B|\le2$, the vertex $w$ has at most two neighbors outside $B$. If it has at most one, we take any perfect matching of the complement: every pairing of its four neighbors gives two virtual edges, each with at least one end in $B-w$. In particular, when it has exactly one outside neighbor, that neighbor is paired with an inside neighbor, and the remaining pair lies entirely in $B-w$.

		If $w$ has two neighbors outside $B$, then both boundary edges are incident with $w$. Each outside neighbor is therefore nonadjacent in $G$ to both inside neighbors. In this situation, we pair each outside neighbor with an inside neighbor. These pairs are nonedges of $G$ and hence of $R$, so they form a perfect matching of the complement, and both have an end in $B-w$. Thus the auxiliary condition holds in Case~1. It also holds in Case~2, since both ends of each prescribed virtual edge lie in $B-w$.

		Let $F$ be obtained from $R$ by adding all the virtual edges. The two virtual edges at any fixed vertex of $U$ have distinct ends and are absent from $R$. To rule out repetitions among virtual edges, let $w_1,w_2$ be distinct uncovered vertices, lying in distinct components $B_1,B_2$ of $G-S$. By the auxiliary condition, each virtual edge at $w_1$ has an end in $B_1-w_1$, whereas both ends of every virtual edge at $w_2$ lie in $V(B_2)\cup S$. Since these vertex sets are disjoint, the edges cannot coincide. Hence $F$ is simple.

		Finally, every vertex $v$ of $F$ is covered by $M$. In passing from $G$ to $F$, it loses its matching edge and one edge for each neighbor in $U$, and gains one virtual edge for each such neighbor. Therefore
		\[
		d_F(v)=4-1-|N_G(v)\cap U|+|N_G(v)\cap U|=3.
		\]
		Hence $F$ is cubic, proving~\ref{it:smooth-F}.
	\end{proof}

	The matching edges will be labeled together with their ends. Restoring a vertex of $U$ transfers the labels of its virtual edges to its four incident edges and preserves the edge-label sum at every vertex covered by $M$. The bound in Lemma~\ref{lem:smoothing}\ref{it:smooth-q} ensures that the reserved ranks lie in the required ranges. We use Lemma~\ref{lem:smoothing}\ref{it:smooth-def} in Sections~\ref{subsec:small} and~\ref{subsec:exceptional}.

	For the rest of this section, let $M$, $U$, and $F$ be as in Lemma~\ref{lem:smoothing}. Let $q=|U|$ and $N=n-q$. Then $N$ is even and $q\le n/5$. A virtual edge of $F$ may have the same ends as an edge of $M$. We regard these as distinct edges and assign their labels independently; only the virtual edge is replaced when the corresponding vertex of $U$ is restored. Let $a=\floor{N/4}$ and $b=\ceil{N/4}$. Since $N$ is even,
	\begin{equation}\label{eq:Nab}
		N=2a+2b\qquad\text{and}\qquad b\in\{a,a+1\}.
	\end{equation}

	\subsection[The main case]{The main case: $N\ge4r$ and $F\not\cong K_4,K_{3,3},3K_4$}\label{subsec:main}

	Assume that
	\begin{equation}\label{eq:main-range}
		N\ge4r.
	\end{equation}
	Then $a\ge r$.  We also have $q\le r$, as $q\le\floor{\frac n5}\le\ceil{\frac{n-1}5}=r$.  Since $n+q$ is even and $n+q\le5r+1+r=6r+1$ by~\eqref{eq:n-range}, we obtain
	\begin{equation}\label{eq:num1}
		a+b+q=\frac{n+q}{2}\le3r.
	\end{equation}
	Moreover, $2a+b=\floor{3N/4}$ by~\eqref{eq:Nab}, and $3N+4q=3n+q\le 16r+3$ by~\eqref{eq:n-range} and $q\le r$, so
	\begin{equation}\label{eq:num2}
		2a+b+q
		=\floor{\frac{3n+q}{4}}
		\le4r.
	\end{equation}

	Assume in addition that $F\not\cong K_4,K_{3,3},3K_4$.  By Theorem~\ref{thm:LPS}, $F$ has a spanning subgraph $H$ whose four degree classes have sizes $a$ or $b$.  When $N\equiv2\pmod4$, the number of vertices of $H$ of odd degree is even, so the two odd-degree classes have the same size. Replacing $H$ by $F-E(H)$ if necessary, we may assume that
	\begin{equation}\label{eq:fourclasses}
		(|V_0|,|V_1|,|V_2|,|V_3|)=(a,b,a,b),
	\end{equation}
	where $V_i=V_i(H)=\{v\in V(F):d_H(v)=i\}$.

	We label the edges of $F$ by
	\begin{equation}\label{eq:Fedge-label}
		\lambda(e)=
		\begin{cases}
			r+1,&\text{if } e\in E(H),\\
			1,&\text{if } e\notin E(H).
		\end{cases}
	\end{equation}
	Thus a vertex in $V_i$ receives a contribution of $3+ir$ from its three incident edges of $F$.

	Recall that each $w\in U$ was replaced by two virtual edges.  Let $h(w)\in\{0,1,2\}$ be the number of these two virtual edges that belong to $H$, and for $h\in\{0,1,2\}$ let $U_h=\{w\in U: h(w)=h\}$.  Let $\alpha=|U_0|$, $\beta=|U_1|$, and $\gamma=|U_2|$, so that $\alpha+\beta+\gamma=q$.  By $n\ge5$, \eqref{eq:main-range}, \eqref{eq:Nab}, \eqref{eq:num1}, \eqref{eq:num2}, and $\beta\le q$, we have
	\begin{equation}\label{eq:ord-hyp}
		r\ge1,\qquad r\le a\le b\le a+1,\qquad a+b+\beta\le3r,\qquad 2a+b+\beta\le4r.
	\end{equation}

	\medskip
	\noindent\textbf{Target weights.}
	The vertex set of $G$ is now partitioned into the seven sets $U_0,V_0,V_1,U_1,V_2,V_3,U_2$, and we aim for the following weights:
	\begin{equation}\label{eq:targets}
		\begin{array}{c@{\qquad}c@{\qquad}l}
			\text{set} & \text{size} & \text{target weights}\\ \hline
			U_0 & \alpha & [\,5,\ 4+\alpha\,]\\
			V_0 & a & [\,5+\alpha,\ 4+\alpha+a\,]\\
			V_1 & b & [\,5+\alpha+a,\ 4+\alpha+N/2\,]\\
			U_1 & \beta & [\,5+\alpha+N/2,\ 4+\alpha+\beta+N/2\,]\\
			V_2 & a & [\,5+\alpha+\beta+N/2,\ 4+\alpha+\beta+N/2+a\,]\\
			V_3 & b & [\,5+\alpha+\beta+N/2+a,\ 4+\alpha+\beta+N\,]\\
			U_2 & \gamma & [\,5+\alpha+\beta+N,\ n+4\,]
		\end{array}
	\end{equation}
	Since $a+b=N/2$ and $N+\alpha+\beta+\gamma=n$, the nonempty intervals in~\eqref{eq:targets} partition $[5,n+4]$, and each interval has the size of its corresponding set. For $w\in U_h$, the four incident edge labels will sum to $4+2hr$, so its attainable weights lie in $[5+2hr,5+(2h+1)r]$. For $v\in V_i$, the matching edge label and vertex label together contribute between $2$ and $2r+2$, so its attainable weights lie in $[5+ir,5+(i+2)r]$. We verify below that the target intervals lie in these ranges. We must also order the vertices within each $V_i$ so that the two ends of every matching edge can attain their target weights using a common edge label.

	\medskip
	\noindent\textbf{Ranks.}
	A target weight of $5+j$ corresponds to rank $j$.  In terms of ranks, the targets for $U_0,U_1,U_2$ in~\eqref{eq:targets} are the three intervals
	\[
	J_0=[0,\alpha-1],\qquad
	J_1=[\alpha+N/2,\ \alpha+N/2+\beta-1],\qquad
	J_2=[N+\alpha+\beta,\ n-1],
	\]
	some of which may be empty.  From $q\le r$, \eqref{eq:main-range}, and~\eqref{eq:num1}, we have
	\begin{equation}\label{eq:reserved-ranges}
		J_0\subseteq[0,r],\qquad
		J_1\subseteq[2r,3r],\qquad
		J_2\subseteq[4r,5r].
	\end{equation}
	Indeed, $N/2\ge2r$ and $N/2+q\le3r$, while $n-1\le5r$ by~\eqref{eq:n-range}.

	Let $\rho_0=0$, $\rho_1=a$, $\rho_2=a+b$, and $\rho_3=2a+b$.
	Thus $\rho_i=\sum_{j=0}^{i-1}|V_j|$ is the number of vertices in the
	classes preceding $V_i$ when the four classes are listed as
	$V_0,V_1,V_2,V_3$.
	Fix a linear order $v_{i,1},\ldots,v_{i,|V_i|}$ on each $V_i$.
	For $v=v_{i,k}$, define $\rho(v)=k-1$; equivalently, $\rho(v)$ is
	the number of vertices of $V_i$ preceding $v$ in this order.
	In particular, the first and last vertices of $V_i$ have positions
	$0$ and $|V_i|-1$, respectively.

	We assign ranks by listing the seven blocks in the order
	$U_0,V_0,V_1,U_1,V_2,V_3,U_2$, starting with rank $0$.
	Before a vertex $v\in V_i$, there are all $\alpha$ vertices of $U_0$,
	the $\rho_i$ vertices in the preceding $V$-classes, and the
	$\rho(v)$ vertices preceding $v$ within $V_i$.
	The $\beta$ vertices of $U_1$ also precede $v$ exactly when $i\ge2$.
	Consequently, the rank of $v$ is
	\begin{equation}\label{eq:rank}
		R(v)=\alpha+\rho_i+\rho(v)+\beta\mathbf1_{\{i\ge2\}},
	\end{equation}
	where $\mathbf1_{\{i\ge2\}}$ is $1$ when $i\ge2$ and $0$ otherwise.
	The term involving $\beta$ therefore accounts for the interval $J_1$
	reserved for $U_1$ between $V_1$ and $V_2$.
	The vertices of $U_2$ occur after all four $V$-classes and do not
	contribute to $R(v)$.

	The ranks assigned to $V_i$ are precisely its target weights
	in~\eqref{eq:targets}, each decreased by $5$.
	Thus the ranks assigned to the vertices of $F$ are exactly
	$[0,n-1]\setminus(J_0\cup J_1\cup J_2)$.

	For $v\in V_i$, let
	\begin{equation}\label{eq:t-def}
		t(v)=R(v)-ir.
	\end{equation}
	We claim that, regardless of the orders within the classes,
	\begin{equation}\label{eq:t-range}
		0\le t(v)\le2r\qquad\text{for every }v\in V(F).
	\end{equation}
	Since $a,b\ge r$, we have $\rho_i\ge ir$ for each $i\in[0,3]$;
	in particular, $\rho_3=2a+b\ge3r$. All other terms in
	$R(v)-\rho_i$ are nonnegative, so $t(v)\ge0$.

	For the upper bound, the maximum values of $t$ on
	$V_0,V_1,V_2,V_3$, respectively, are
	\[
	\alpha+a-1,\qquad
	\alpha+a+b-1-r,\qquad
	\alpha+\beta+2a+b-1-2r,\qquad
	\alpha+\beta+N-1-3r.
	\]
	The successive differences are $b-r$, $a+\beta-r$, and $b-r$,
	all of which are nonnegative. Hence, by~\eqref{eq:n-range},
	\[
	t(v)\le\alpha+\beta+N-1-3r
	=n-\gamma-1-3r\le2r
	\]
	for every $v\in V(F)$. This proves~\eqref{eq:t-range}.

	\medskip
	\noindent\textbf{Ordering the classes.}
	We now choose the orders within $V_0,\ldots,V_3$ to control the
	differences of $t$ along the edges of $M$.

	Let $uv\in M$ with $u\in V_i$ and $v\in V_j$.
	By~\eqref{eq:Fedge-label}, the three edges of $F$ incident with $u$
	have labels summing to $3+ir$.
	The two contributions still to be assigned at $u$ are its vertex label
	$\lambda(u)$ and the label $\lambda(uv)$ of its unique incident
	matching edge.
	Thus attaining the target weight $5+R(u)$ requires
	$(3+ir)+\lambda(uv)+\lambda(u)=5+R(u)$.
	Subtracting $3+ir$ gives
	$\lambda(uv)+\lambda(u)=2+R(u)-ir=2+t(u)$,
	where the last equality uses~\eqref{eq:t-def}.
	Applying the same calculation at $v$, whose incident edges of $F$
	have labels summing to $3+jr$, gives the two requirements

	\begin{equation}\label{eq:need}
		\lambda(uv)+\lambda(u)=2+t(u)
		\qquad\text{and}\qquad
		\lambda(uv)+\lambda(v)=2+t(v).
	\end{equation}
	Subtracting these equations gives
	$\lambda(v)-\lambda(u)=t(v)-t(u)$. Since the vertex labels lie in
	$[1,r+1]$, it is necessary that $|t(u)-t(v)|\le r$.

	Together with~\eqref{eq:t-range}, this condition is also sufficient.
	Indeed, writing $\lambda(uv)=1+z$, the three labels lie in $[1,r+1]$
	precisely when
	\begin{equation}\label{eq:z-window}
		\max\{0,t(u)-r,t(v)-r\}
		\le z\le
		\min\{r,t(u),t(v)\}.
	\end{equation}
	By~\eqref{eq:t-range}, this interval is nonempty if and only if
	$|t(u)-t(v)|\le r$. It therefore remains to prove the following claim.

	\begin{claim}\label{clm:ordering}
		The classes $V_0,V_1,V_2,V_3$ can be ordered so that
		\begin{equation}\label{eq:matching-diff}
			|t(u)-t(v)|\le r\qquad\text{for every }uv\in M.
		\end{equation}
	\end{claim}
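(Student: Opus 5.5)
The plan is to reduce the claim to a statement about positions within the classes, and then build the four orders simultaneously by a greedy scan of $M$. For $v\in V_i$ write $t(v)=c_i+\rho(v)$, where by~\eqref{eq:rank} and~\eqref{eq:t-def} the offsets are
\[
c_0=\alpha,\quad c_1=\alpha+a-r,\quad c_2=\alpha+\beta+a+b-2r,\quad c_3=\alpha+\beta+2a+b-3r .
\]
The first step is to check, from~\eqref{eq:ord-hyp}, that the offsets increase with gaps in $[0,r]$ and have total spread at most $r$:
\begin{itemize}
\item $c_1-c_0=c_3-c_2=a-r\ge0$, and $c_2-c_1=b+\beta-r\ge0$, since $a,b\ge r$.
\item $c_3-c_0=2a+b+\beta-3r\le r$, by $2a+b+\beta\le4r$.
\end{itemize}
Hence $0\le c_j-c_i\le r$ whenever $i\le j$. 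For $uv\in M$ with $u\in V_i$, $v\in V_j$ and $i\le j$, condition~\eqref{eq:matching-diff} is therefore equivalent to
\[
c_j-c_i-r\;\le\;\rho(u)-\rho(v)\;\le\;c_j-c_i+r .
\]
The window on the right always contains $0$ and $1$, because $r\ge1$. It is also at least $r$ long on the side where $u$ (the vertex in the lower-indexed class) is ahead.

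The second step is the construction. Since every vertex of $F$ is covered by $M$, I will process the edges of $M$ one at a time, keeping a counter $p_i$ for each class (the next unused position in $V_i$). An edge $uv$ with $u\in V_i$, $v\in V_j$, $i\ne j$, takes positions $p_i$ and $p_j$. An edge inside a single class $V_i$ takes positions $p_i$ and $p_i+1$; this is fine, since $|t(u)-t(v)|=1\le r$.

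For edges between different classes, it suffices to keep the counters balanced throughout, with lower-indexed classes allowed to run slightly ahead. Concretely, the invariant I aim for at every stage is
\[
0\le p_i-p_j\le r \quad\text{for } i<j \text{ among the classes still containing unprocessed vertices.}
\]
Any edge processed while this holds then lies in the admissible window. To maintain the invariant, at each step I will choose an unprocessed edge incident with a class whose counter is currently smallest, preferring the highest index among ties.

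The third step is the analysis of drift. The class sizes are $a,b,a,b$ with $b\le a+1$, so the classes run out at essentially the same time. A class can only fall behind when all remaining edges at the lagging class go to classes that are already ahead.

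I expect this step to be the main obstacle: proving that the greedy choice never violates the invariant. This is where the slack $r\ge1$ is needed, together with the one-sided extra room of size $r-(c_j-c_i)$. If the greedy argument gets stuck, the fallback is a global construction. Form the auxiliary multigraph on $\{0,1,2,3\}$ whose edges are the edges of $M$. Decompose it into closed trails and paths (the degrees are $a,b,a,b$), and traverse these, assigning positions in traversal order. Along a trail the counters of consecutive classes differ by at most one at every moment, so every difference $\rho(u)-\rho(v)$ lies in $\{-1,0,1\}$. The remaining task is then to orient the traversal so that the value $-1$ never occurs on an edge where $c_j-c_i=r$; in that case the window is $[0,2r]$ and excludes $-1$. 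This should be arranged by starting each trail at the lowest-indexed class it visits.
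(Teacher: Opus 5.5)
\textbf{The proposal has a genuine gap: the construction of the orders, which is the whole content of the claim, is not established.} Your reduction is correct and worth keeping. Writing $t(v)=c_i+\rho(v)$ with $0\le c_j-c_i\le r$ for $i\le j$, condition \eqref{eq:matching-diff} for an edge of type $ij$ becomes exactly $c_j-c_i-r\le\rho(u)-\rho(v)\le c_j-c_i+r$, a window that always contains $[0,r]$. What remains is to show that orders meeting all these windows at once exist, and both routes you sketch fail as stated.

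\emph{The greedy route.} The invariant $0\le p_i-p_j\le r$ for all active $i<j$ means $p_0\ge p_1\ge p_2\ge p_3$ on the active classes, and this cannot be maintained.
\begin{itemize}
\item Your tie-break (highest index first) processes an edge at $V_3$ at the very first step. Afterwards either $p_3>p_2$, or, for a type-$23$ edge, $p_2=p_3=1>p_1=0$.
\item Take $a=b$ with every edge of $M$ of type $03$ or $12$. Whichever edge is processed first gives $p_2<p_3$ or $p_0<p_1$. Yet here the claim is trivial: aligning the blocks gives difference $0$ on every edge.
\end{itemize}
So the invariant is the wrong thing to maintain. You give no replacement argument controlling the difference at the moment each edge is processed, and you flag this yourself as the main obstacle.

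\emph{The trail route.} It rests on a false assertion: along a trail, the counters of consecutive classes need not differ by at most one. Suppose $M$ has two edges of type $01$ and $2m$ of type $12$. The closed trail $0,1,2,1,2,\dots,2,1,0$ starts at its lowest-indexed class, as you prescribe. Traversed first, it gives the closing edge position $1$ in $V_0$ and position $2m+1$ in $V_1$. Hence $\rho(u)-\rho(v)=-2m$, which lies below the lower end $a-2r$ of the type-$01$ window once $2m>2r-a$. A concrete instance consistent with \eqref{eq:ord-hyp} is $r=8$, $a=b=10$, $m=4$. Avoiding this requires choosing the decomposition and interleaving the trails (including the loops) with care, which is the same unsolved balancing problem.

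\emph{What the paper does instead.} The paper avoids dynamic balancing altogether. It orders each $V_i$ in blocks according to the class of the partner, listing the edges between two classes in the same order in both blocks. Then every difference depends only on the type and is given in closed form by \eqref{eq:orderA-table}. Using \eqref{eq:matching-counts} and \eqref{eq:ord-hyp}, every type lies within $[-r,r]$ except possibly type $12$. That type is repaired in one of two ways:
\begin{itemize}
\item by swapping the $0$- and $1$-blocks of $V_2$;
\item when the swap would push type $02$ above $r$, by splitting the $0$-block of $V_2$ after $A_1$ vertices and inserting $x_{12}-A_0$ type-$03$ vertices into the $0$-block of $V_0$ (after its first $A_1$ vertices).
\end{itemize}
Some such explicit construction, with the bounds verified, is what your proposal is missing.
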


	We postpone the proof of Claim~\ref{clm:ordering} to
	Section~\ref{subsec:claimproof} and first complete the construction.
	Fix the orders given by the claim, and define $\rho(v)$, $R(v)$,
	and $t(v)$ with respect to these orders.
	Then~\eqref{eq:t-range} and~\eqref{eq:matching-diff} both hold.

	\medskip
	\noindent\textbf{Labels and restoration.}
	For each $uv\in M$, let
	$z_{uv}=\max\{0,t(u)-r,t(v)-r\}$.
	By~\eqref{eq:t-range} and~\eqref{eq:matching-diff},
	\[
	0\le z_{uv}\le r,\qquad
	0\le t(u)-z_{uv}\le r,\qquad
	0\le t(v)-z_{uv}\le r.
	\]
	Define
	\[
	\lambda(uv)=1+z_{uv},\qquad
	\lambda(u)=1+t(u)-z_{uv},\qquad
	\lambda(v)=1+t(v)-z_{uv}.
	\]
	All these labels lie in $[1,r+1]=[1,s]$.
	For $v\in V_i$ with matching partner $u$, we have
	\begin{equation}\label{eq:matched-weight}
		\wt_{\lambda}(v)
		=(3+ir)+(1+z_{uv})+(1+t(v)-z_{uv})
		=5+R(v),
	\end{equation}
	where the weight is computed using the three incident edges of $F$
	and the matching edge. A virtual edge and a matching edge with the
	same ends are treated as distinct edges.
	Thus the vertices of $V_i$ receive exactly their target weights
	in~\eqref{eq:targets}.

	It remains to restore the vertices of $U$.
	For each virtual edge $xy$ at $w\in U$, delete the virtual edge
	and assign its label to both edges $xw$ and $wy$ of $G$.
	This preserves the weight of every vertex covered by $M$,
	so~\eqref{eq:matched-weight} holds in $G$.

	For each $h\in[0, 2]$, assign the ranks in $J_h$ bijectively to
	the vertices of $U_h$. If $w\in U_h$ is assigned rank $j$, let
	$\lambda(w)=1+j-2hr$. By~\eqref{eq:reserved-ranges}, this label
	lies in $[1,r+1]$. The four incident edge labels at $w$ sum to
	$4+2hr$, so
	\[
	\wt_{\lambda}(w)=4+2hr+\lambda(w)=5+j.
	\]
	Together with~\eqref{eq:matched-weight}, this shows that the vertex
	weights are precisely $[5,n+4]$.
	Thus $\lambda$ is a vertex irregular total $s$-labeling of $G$,
	proving Theorem~\ref{thm:four-main} in the main case.

	\subsection[The case N < 4r]{The case $N<4r$}\label{subsec:small}

	Suppose that $N<4r$.
	By~\eqref{eq:n-range}, we have $n\ge5r-3$ and
	$q\le\floor{n/5}\le r$.
	Hence $N=n-q\ge4r-3$.
	Since $N$ is even and $N<4r$, it follows that $N=4r-2$.
	Now $n=N+q\ge5r-3$ gives $q\ge r-1$.
	On the other hand, $5q\le n=N+q=4r-2+q$ gives
	$4q\le4r-2$, so the integrality of $q$ implies $q\le r-1$.
	Consequently, $q=r-1$ and $n=4r-2+q=5q+2$.

	For each component $J$ of $G$, let $q_J$ be the number of vertices of $J$ not covered by $M$; as $M\cap E(J)$ is a maximum matching of $J$, we have $q_J=\defect(J)$.  A component with a perfect matching has $q_J=0$ and at least six vertices.  A component without a perfect matching that is not factor-critical satisfies $|V(J)|-5q_J\ge6q_J$ by~\eqref{eq:11def}.  A factor-critical component has $q_J=1$, and $|V(J)|-5$ is a nonnegative even integer.  Since $\sum_J(|V(J)|-5q_J)=n-5q=2$, every component of $G$ is factor-critical; all but one of them have five vertices, and the remaining one has seven vertices.  Hence
	\begin{equation}\label{eq:small-structure}
		G=kK_5\cup J_7
	\end{equation}
	for some integer $k\ge0$ and some $4$-regular graph $J_7$ on seven vertices.

	The labelings of $K_5$ and $J_7$ used below are illustrated in Figure~\ref{fig:small-labelings}.

	We first label $K_5$.  Let $H$ be a spanning subgraph of $K_5$ consisting of a triangle with one pendant edge attached, together with an isolated vertex.  Label the four edges of $H$ by $2$ and all other edges by $1$.  Label the isolated vertex, the pendant vertex, and one vertex of degree $2$ in $H$ by $1$, and the other vertex of degree $2$ in $H$ and the vertex of degree $3$ in $H$ by $2$.  The weights are $5,6,7,8,9$.

	We next label $J_7$.  Every $4$-regular graph on seven vertices contains a triangle: otherwise the four neighbors of a vertex would be independent, while each of them would need three further neighbors among the two remaining vertices.  Let $H$ consist of the three edges of a triangle in $J_7$ together with one edge leaving the triangle.  Label the edges of $H$ by $3$ and all other edges by $1$.  Label the three vertices of degree $0$ in $H$ by $1,2,3$, the vertex of degree $1$ by $2$, the two vertices of degree $2$ by $1,2$, and the vertex of degree $3$ by $1$.  The weights are $5,6,\ldots,11$.

	\begin{figure}[htbp]
		\centering
		\begin{minipage}[t]{.45\textwidth}
			\centering
			\textbf{(a)} $K_5$\par\medskip
			\begin{tikzpicture}
				\coordinate (a) at (0,1.4);
				\coordinate (b) at (-1.35,.3);
				\coordinate (c) at (1.35,.3);
				\coordinate (d) at (.9,-1.3);
				\coordinate (e) at (-.9,-1.3);
				\foreach \u/\v in {a/e,b/d,b/e,c/d,c/e,d/e}
				\draw[tvedge] (\u)--(\v);
				\draw[tvheavy] (a)--(b)--(c)--(a)--(d);
				\tvvertex{a}{2}{9}{90}
				\tvvertex{b}{1}{7}{180}
				\tvvertex{c}{2}{8}{0}
				\tvvertex{d}{1}{6}{-45}
				\tvvertex{e}{1}{5}{-135}
			\end{tikzpicture}
			\par\smallskip
			{\small Thick edges: label $2$.\par Thin edges: label $1$.}
		\end{minipage}\hfill
		\begin{minipage}[t]{.52\textwidth}
			\centering
			\textbf{(b)} The spanning subgraph $H$ of $J_7$\par\medskip
			\begin{tikzpicture}
				\coordinate (a) at (0,.6);
				\coordinate (b) at (-1.25,-.65);
				\coordinate (c) at (1.25,-.65);
				\coordinate (d) at (0,2.05);
				\coordinate (e) at (-1.4,-2.0);
				\coordinate (f) at (0,-2.0);
				\coordinate (g) at (1.4,-2.0);
				\draw[tvheavy] (a)--(b)--(c)--(a)--(d);
				\tvvertex{a}{1}{11}{180}
				\tvvertex{b}{1}{9}{180}
				\tvvertex{c}{2}{10}{0}
				\tvvertex{d}{2}{8}{90}
				\tvvertex{e}{1}{5}{-90}
				\tvvertex{f}{2}{6}{-90}
				\tvvertex{g}{3}{7}{-90}
			\end{tikzpicture}
			\par\smallskip
			{\small Shown edges: label $3$.\par Omitted edges of $J_7$: label $1$.}
		\end{minipage}
		\caption{The labelings used when $N<4r$. A number inside a vertex is its label, and the bracketed number beside it is its weight in the ambient $4$-regular graph. Panel~(a) shows all of $K_5$. Panel~(b) shows only the four edges of $H$, not all edges of $J_7$; its three isolated vertices are isolated in $H$, not in $J_7$. Every edge of $J_7-E(H)$ receives label $1$.}
		\label{fig:small-labelings}
	\end{figure}

	Now consider $G$ as in~\eqref{eq:small-structure}, and index its copies of $K_5$ by $[0,k-1]$.  For $j\in[0,k-1]$, label copy $j$ by the labeling above with $j$ added to every vertex and edge label; this shifts each weight by $5j$.  Label $J_7$ by the labeling above with $k$ added to every label.  The weights of the resulting labeling of $G$ are $5,6,\ldots,n+4$, and the largest label is $k+3=\ceil{\frac{n+4}{5}}$.  This proves Theorem~\ref{thm:four-main} when $N<4r$.

	\subsection{The exceptional cubic graphs}\label{subsec:exceptional}

	It remains to consider the case $N\ge4r$ and $F\cong K_4,K_{3,3},3K_4$.  Since $q\le n/5$ and $n=N+q$, we have $q\le N/4$.

	Suppose first that $F\cong K_4$.  Then $q\le1$, and the only $4$-regular graph $G$ on $N+q\le5$ vertices is $K_5$, which was labeled in Section~\ref{subsec:small}.

	Suppose next that $F\cong K_{3,3}$.  Then $q\le1$.  If $q=1$, then $n=7$, which falls under the case $N<4r$ treated in Section~\ref{subsec:small}.  If $q=0$, then $F=G-M$, so $M$ would be a perfect matching in the complement of $K_{3,3}$; but that complement is $2K_3$, which has no perfect matching.

	Finally, suppose that $F\cong 3K_4$, so $N=12$ and $q\le3$.  Let $Q_0,Q_1,Q_2$ be the three copies of $K_4$ in $F$.

	Assume first that $q=0$.  Label every edge of $M$ by $1$, and label the copies as follows.  On $Q_0$, label every edge by $1$ and the vertices by $1,2,3,4$; the weights are $5,6,7,8$.  On $Q_1$, label the edges of a spanning path by $4$ and the other three edges by $1$; label the two ends of the path by $2,3$ and its two internal vertices by $1,2$; the weights are $9,10,11,12$.  On $Q_2$, label every edge by $4$ except one edge, which receives label $3$; label the two ends of this edge by $1,2$ and the other two vertices by $2,3$; the weights are $13,14,15,16$.

	Assume next that $q=1$, and let $w$ be the vertex of $U$.  We use the labeling for $q=0$, but arrange that both virtual edges at $w$ receive label $4$.  If the two virtual edges lie in the same copy of $K_4$, we take that copy to be $Q_2$ and choose the edge of label $3$ in $Q_2$ to be a different edge.  If they lie in different copies, we take these copies to be $Q_1$ and $Q_2$, choosing the spanning path in $Q_1$ to contain its virtual edge and the edge of label $3$ in $Q_2$ to be different from its virtual edge.  Restoring $w$ and labeling it by $1$ gives $w$ weight $17$.

	Suppose that $q=2$, so $n=14$.
	For each component $J$ of $G$, let $q_J=|V(J)\cap U|$ be the
	number of vertices of $J$ not covered by $M$, as in Section~\ref{subsec:small}.
	If $J$ has no perfect matching and is not factor-critical, then
	Lemma~\ref{lem:smoothing}\ref{it:smooth-def} gives
	$|V(J)|\ge11q_J$.
	Such a component containing both vertices of $U$ would therefore
	have at least $22$ vertices.
	If it contained exactly one vertex of $U$, it would have at least
	$11$ vertices, leaving at most three vertices for the component
	containing the other vertex of $U$.
	Both possibilities are impossible, since $n=14$ and every
	$4$-regular component has at least five vertices.

	Thus each vertex of $U$ lies in a factor-critical component.
	A maximum matching of a factor-critical graph leaves exactly one
	vertex uncovered.
	Since the restriction of $M$ to each component of $G$ is a maximum
	matching of that component, each factor-critical component contains
	exactly one vertex of $U$.
	Hence the two vertices of $U$ lie in two distinct factor-critical
	components of $G$.
	These are the only components: three components would require
	at least $15$ vertices.

	Smoothing and deleting matching edges do not create edges between
	distinct components of $G$.
	Since $F\cong3K_4$, each component of $G$ therefore contributes a
	multiple of four vertices to $F$.
	Each of the two factor-critical components loses exactly one vertex
	when passing to $F$, so their orders are congruent to $1$ modulo $4$.
	Their orders are at least five and sum to $14$; consequently,
	$G=K_5\cup J_9$ for some $4$-regular graph $J_9$ on nine vertices.

	The main case of Section~\ref{subsec:main} applies to $J_9$:
	its smoothed cubic graph has eight vertices, and $2K_4$ is not an
	exception in Theorem~\ref{thm:LPS}.
	We obtain a labeling of $J_9$ with labels at most $3$ and weights
	$5,\ldots,13$.
	Adding $1$ to every label on $J_9$ shifts these weights to
	$10,\ldots,18$, and we label $K_5$ as in
	Section~\ref{subsec:small} to obtain the weights $5,\ldots,9$.
	All labels are at most $4$.

	Finally, suppose that $q=3$, so $n=15=5q$.  With $q_J$ as above, the argument of Section~\ref{subsec:small} shows that every term of $\sum_J(|V(J)|-5q_J)=n-5q=0$ is nonnegative, and that a term vanishes only when $J$ is a factor-critical graph on five vertices. Hence every component of $G$ is a factor-critical graph on five vertices, so $G=3K_5$.  We label the three copies of $K_5$ as in Section~\ref{subsec:small}, adding $0,1,2$, respectively, to every label.  The weights are $5,\ldots,19$, and all labels are at most $4$.

	In all cases we have constructed a vertex irregular total $s$-labeling of $G$ with weight set $[5,n+4]$.  Together with the lower bound~\eqref{eq:regular-lower}, this shows $\tvs(G)=s$.  This proves Theorem~\ref{thm:four-main}. \qed

	\subsection{Proof of Claim~\ref{clm:ordering}}\label{subsec:claimproof}

	Recall that $M$ is a perfect matching on $V(F)=V_0\cup V_1\cup V_2\cup V_3$, that $|V_0|=|V_2|=a$ and $|V_1|=|V_3|=b$ by~\eqref{eq:fourclasses}, and that $r,a,b,\beta$ satisfy~\eqref{eq:ord-hyp}.

	\begin{claimproof}[Proof of Claim~\ref{clm:ordering}]

		We must choose a linear order on each of $V_0,V_1,V_2,V_3$ so that,
		with $R(v)$ and $t(v)$ defined by~\eqref{eq:rank}
		and~\eqref{eq:t-def}, we have $|t(u)-t(v)|\le r$ for every
		$uv\in M$.
		The \emph{partner of a vertex $v$ under $M$} is the unique vertex
		$u$ such that $uv\in M$.
		For distinct $i,j\in[0,3]$, let $x_{ij}=x_{ji}$ be the number of
		edges of $M$ with one end in $V_i$ and the other end in $V_j$.
		Let $\ell_i$ be the number of vertices of $V_i$ whose partner
		also lies in $V_i$; thus $\ell_i$ is twice the number of edges of
		$M$ with both ends in $V_i$.
		By~\eqref{eq:fourclasses},

		\begin{equation}\label{eq:matching-counts}
			\begin{aligned}
				\ell_0+x_{01}+x_{02}+x_{03}&=a,\\
				\ell_1+x_{01}+x_{12}+x_{13}&=b,\\
				\ell_2+x_{02}+x_{12}+x_{23}&=a,\\
				\ell_3+x_{03}+x_{13}+x_{23}&=b.
			\end{aligned}
		\end{equation}
		By~\eqref{eq:rank} and~\eqref{eq:t-def}, for $v\in V_i$ we have $t(v)=\alpha+\rho_i-ir+\beta\mathbf1_{\{i\ge2\}}+\rho(v)$, so for $u\in V_i$ and $v\in V_j$ the difference $t(v)-t(u)$ depends only on $i$, $j$, $\rho(u)$, and $\rho(v)$.

		We order each class as follows.  Partition $V_i$ into four \emph{blocks} according to the class containing the partner under $M$, and list the blocks in some \emph{partner order}.  For two distinct classes, list the edges of $M$ between them in the same order in the two corresponding blocks.  Within the block of $V_i$ whose partners lie in $V_i$, place the two ends of each edge of $M$ consecutively.  An edge of $M$ with both ends in $V_i$ then satisfies~\eqref{eq:matching-diff} automatically, so we only need to consider edges of $M$ between distinct classes; we say that such an edge is of \emph{type $ij$} if it joins $V_i$ and $V_j$ with $i<j$.

		We first use the partner order $0,1,2,3$ in every class.
		For an edge $uv\in M$ of type $ij$, with $u\in V_i$ and $v\in V_j$,
		let $p$ be its position, starting from $0$, in the common order of
		the edges between $V_i$ and $V_j$.
		Both ends then have position $p$ within their respective partner
		blocks.
		Their positions in the full classes are obtained by adding the
		sizes of the blocks that precede these partner blocks.
		In the difference $t(v)-t(u)$, the common terms $\alpha$ and $p$
		cancel.
		The resulting differences depend only on the edge type and are
		as follows; we derive each entry below.

		\begin{equation}\label{eq:orderA-table}
			\begin{array}{c|c}
				(i,j)&t(v)-t(u)\\ \hline
				01&a-r-\ell_0\\
				02&a+b-2r+\beta-\ell_0-x_{01}\\
				03&a+b-3r+\beta+x_{03}\\
				12&b-r+\beta+x_{02}-x_{01}-\ell_1\\
				13&a+b-2r+\beta-\ell_3-x_{23}\\
				23&b-r-\ell_3.
			\end{array}
		\end{equation}

		For type $01$, the partner block begins after the $\ell_0$
		vertices with partners in $V_0$ in the order on $V_0$, and it is
		first in the order on $V_1$.
		Thus $\rho(u)=\ell_0+p$ and $\rho(v)=p$.
		Since $\rho_1-\rho_0=a$, we obtain
		$t(v)-t(u)=a-r+(p-\ell_0-p)=a-r-\ell_0$.

		For type $02$, the partner block in $V_0$ is preceded by the
		blocks of sizes $\ell_0$ and $x_{01}$, while its corresponding
		block in $V_2$ is first.
		Hence $\rho(u)=\ell_0+x_{01}+p$ and $\rho(v)=p$.
		Now $\rho_2-\rho_0=a+b$, and only the vertex in $V_2$ receives
		the additional term $\beta$ in~\eqref{eq:rank}.
		Consequently,
		$t(v)-t(u)=a+b-2r+\beta-\ell_0-x_{01}$.

		For type $03$, the block in $V_0$ is last and the block in $V_3$
		is first.
		By~\eqref{eq:matching-counts},
		$\rho(u)=\ell_0+x_{01}+x_{02}+p=a-x_{03}+p$, whereas
		$\rho(v)=p$.
		Using $\rho_3-\rho_0=2a+b$, we obtain
		$t(v)-t(u)=2a+b-3r+\beta-(a-x_{03})
		=a+b-3r+\beta+x_{03}$.

		For type $12$, the block in $V_1$ is preceded by the blocks of
		sizes $x_{01}$ and $\ell_1$, and the block in $V_2$ is preceded
		by the block of size $x_{02}$.
		Thus $\rho(u)=x_{01}+\ell_1+p$ and $\rho(v)=x_{02}+p$.
		Since $\rho_2-\rho_1=b$, the difference is
		$t(v)-t(u)=b-r+\beta+x_{02}-x_{01}-\ell_1$.

		For type $13$, the block in $V_1$ is last, while the block in
		$V_3$ follows the block of size $x_{03}$.
		Hence $\rho(u)=x_{01}+\ell_1+x_{12}+p=b-x_{13}+p$ and
		$\rho(v)=x_{03}+p$.
		Using $\rho_3-\rho_1=a+b$, we obtain
		$t(v)-t(u)=a+b-2r+\beta+x_{03}+x_{13}-b$.
		The last equality in~\eqref{eq:matching-counts} gives
		$x_{03}+x_{13}-b=-\ell_3-x_{23}$, which yields the entry in
		the table.

		For type $23$, the block in $V_2$ is last, and the block in
		$V_3$ follows the blocks of sizes $x_{03}$ and $x_{13}$.
		Thus $\rho(u)=x_{02}+x_{12}+\ell_2+p=a-x_{23}+p$ and
		$\rho(v)=x_{03}+x_{13}+p=b-\ell_3-x_{23}+p$.
		Here $\rho_3-\rho_2=a$, and the two terms $\beta$ cancel.
		Therefore,
		$t(v)-t(u)=a-r+(b-\ell_3-x_{23})-(a-x_{23})
		=b-r-\ell_3$.

		We check that every entry of~\eqref{eq:orderA-table} lies in $[-r,r]$, except that the entry of type $12$ might exceed $r$.  From~\eqref{eq:ord-hyp} we have $a\ge r$, $b\ge r$, $a+b\ge2r$, and $a\le b\le 2r$, where the last bound follows from $2b-1\le a+b\le 3r$; and from~\eqref{eq:matching-counts}, each of $\ell_0+x_{01}+x_{02}$, $\ell_0+x_{01}$, $x_{03}$ is at most $a$, and each of $x_{01}+\ell_1$, $\ell_3+x_{23}$, $\ell_3$ is at most $b$.

		For type $01$, we have $-r\le a-r-\ell_0\le a-r\le r$, as $\ell_0\le a\le2r$.  For type $02$, the entry is at most $a+b+\beta-2r\le r$ by~\eqref{eq:ord-hyp}, and at least $(a+b+\beta-2r)-a=b+\beta-2r\ge-r$, as $\ell_0+x_{01}\le a$ and $b\ge r$.

		For type $03$, the entry is at least $a+b-3r\ge-r$, and at most $2a+b+\beta-3r\le r$ by~\eqref{eq:ord-hyp}, as $x_{03}\le a$.  For type $12$, the entry is at least $\beta+x_{02}-r\ge-r$, as $x_{01}+\ell_1\le b$; its upper bound is the one that may fail.

		For type $13$, the entry is at most $a+b+\beta-2r\le r$, and at least $a+\beta-2r\ge-r$, as $\ell_3+x_{23}\le b$ and $a\ge r$.  For type $23$, we have $-r\le b-r-\ell_3\le b-r\le r$, as $\ell_3\le b\le2r$.

		The only possible obstruction in the initial order is that the
		type-$12$ difference may exceed $r$. We therefore consider changing
		the relative positions of the blocks in $V_2$ whose partners lie in
		$V_0$ and $V_1$. To determine when such a change is needed and when
		it suffices, let
		\[
		A_0=3r-a-b-\beta+\ell_0+x_{01}
		\qquad\text{and}\qquad
		A_1=2r-b-\beta+\ell_1+x_{01}.
		\]
		The initial type-$02$ and type-$12$ differences are then $r-A_0$
		and $r+x_{02}-A_1$, respectively. Thus $A_0$ is the amount by which
		the type-$02$ difference may increase without exceeding $r$, while
		the type-$12$ difference is at most $r$ precisely when
		$x_{02}\le A_1$. Interchanging these two blocks in $V_2$ decreases
		the type-$12$ difference by $x_{02}$ and increases the type-$02$
		difference by $x_{12}$, so the latter remains at most $r$ precisely
		when $x_{12}\le A_0$. These observations motivate the first two
		cases below.

		Both $A_0$ and $A_1$ are nonnegative: $A_0\ge0$
		by~\eqref{eq:ord-hyp}, and $A_1\ge0$ because
		$b+\beta\le3r-a\le2r$ by~\eqref{eq:ord-hyp}.
		We consider three cases.

		\medskip
		\noindent\textbf{Case 1.} \emph{$x_{02}\le A_1$.}\par\nobreak
		\medskip
		The type-$12$ difference is $r+x_{02}-A_1\le r$.
		Since all other required bounds have already been verified,
		the initial order satisfies~\eqref{eq:matching-diff}.

		\medskip
		\noindent\textbf{Case 2.}
		\emph{$x_{02}>A_1$ and $x_{12}\le A_0$.}\par\nobreak
		\medskip
		We change only the partner order in $V_2$, to $1,0,2,3$.
		The type-$12$ difference becomes
		$b-r+\beta-x_{01}-\ell_1=r-A_1\le r$.
		It is also at least $\beta-r\ge-r$, since
		$x_{01}+\ell_1\le b$.
		The type-$02$ difference becomes
		$a+b-2r+\beta+x_{12}-\ell_0-x_{01}
		=r-A_0+x_{12}\le r$.
		Its lower bound remains valid because it has only increased.
		All other differences are unchanged.
		Thus~\eqref{eq:matching-diff} holds.

		\medskip
		\noindent\textbf{Case 3.}
		\emph{$x_{02}>A_1$ and $x_{12}>A_0$.}\par\nobreak
		\medskip
		Neither of the preceding two orders gives all the required bounds,
		so we split the relevant blocks. Let $k=A_1$ and $d=x_{12}-A_0$.
		In $V_2$, we keep the first $k$ vertices with partners in $V_0$
		before the type-$12$ block and place the remaining $x_{02}-k$
		vertices immediately after it. Each $V_2$-end of a type-$12$ edge
		then moves $x_{02}-k$ positions earlier, so its difference becomes
		$r+x_{02}-A_1-(x_{02}-k)=r$. However, the $V_2$-end of each of the
		remaining type-$02$ edges moves $x_{12}$ positions later. Without
		changing $V_0$, its difference would therefore increase from
		$r-A_0$ to $r-A_0+x_{12}=r+d$.

		To remove this excess of $d$, we split the corresponding type-$02$
		block in $V_0$ after its first $k$ vertices and insert the first
		$d$ vertices with partners in $V_3$ between the two parts.
		This moves the $V_0$-end of each of the remaining type-$02$ edges
		$d$ positions later, reducing its difference by $d$ to $r$.
		The first $k$ type-$02$ edges retain the positions of both ends,
		and the orders in $V_1$ and $V_3$ are unchanged.

		We first check that these choices are possible.
		We have $0\le k<x_{02}$ and $d>0$.
		By the third equality in~\eqref{eq:matching-counts},
		$x_{02}+x_{12}\le a$. Using the first equality there and the
		definition of $A_0$, we obtain
		\[
		x_{03}-d
		=3r-b-\beta-(x_{02}+x_{12})
		\ge3r-a-b-\beta
		\ge0.
		\]
		Thus $d\le x_{03}$. We use the following partner sequences in
		$V_0$ and $V_2$, where $j^m$ denotes $m$ consecutive vertices
		whose partners lie in $V_j$:
		\begin{align*}
			V_0:&\quad
			0^{\ell_0},\ 1^{x_{01}},\ 2^k,\ 3^d,\
			2^{x_{02}-k},\ 3^{x_{03}-d},\\
			V_2:&\quad
			0^k,\ 1^{x_{12}},\ 0^{x_{02}-k},\
			2^{\ell_2},\ 3^{x_{23}}.
		\end{align*}
		Figure~\ref{fig:case3-block-orders} compares these sequences with
		the initial orders.

		\begin{figure}[htbp]
			\centering
			\definecolor{tvordergray}{HTML}{D6D6D6}
			\definecolor{tvorderpink}{HTML}{F5B6B9}
			\definecolor{tvorderblue}{HTML}{A0CFF5}
			\definecolor{tvordergreen}{HTML}{BDE6B9}
			\definecolor{tvorderyellow}{HTML}{FFE69B}
			\definecolor{tvorderpurple}{HTML}{D4C2EF}
			\newcommand{\tvorderblock}[5]{%
				\path[draw=black,line width=.55pt,fill=#4]
				(#1,#2) rectangle ++(#3,.90);
				\node[inner sep=0pt,text=black]
				at ({#1+.5*#3},{#2+.45}) {$#5$};}
			\begin{tikzpicture}[x=1cm,y=1cm,font=\small]
				\node[font=\small\bfseries,inner sep=0pt] at (3.69,2.76)
				{Initial order};
				\node[font=\small\bfseries,inner sep=0pt] at (12.60,2.76)
				{Modified order (Case 3)};

				\node[anchor=east,inner sep=0pt] at (.53,1.80) {$V_0:$};
				\tvorderblock{.68}{1.35}{1.36}{tvordergray}{0^{\ell_0}}
				\tvorderblock{2.04}{1.35}{1.22}{tvorderpink}{1^{x_{01}}}
				\tvorderblock{3.26}{1.35}{1.75}{tvorderblue}{2^{x_{02}}}
				\tvorderblock{5.01}{1.35}{1.69}{tvordergreen}{3^{x_{03}}}

				\node[anchor=east,inner sep=0pt] at (.53,.45) {$V_2:$};
				\tvorderblock{.68}{0}{1.82}{tvorderblue}{0^{x_{02}}}
				\tvorderblock{2.50}{0}{1.27}{tvorderyellow}{1^{x_{12}}}
				\tvorderblock{3.77}{0}{1.47}{tvorderpurple}{2^{\ell_2}}
				\tvorderblock{5.24}{0}{1.46}{tvordergreen}{3^{x_{23}}}

				\node[font=\LARGE,inner sep=0pt] at (7.54,1.125) {$\Longrightarrow$};

				\node[anchor=east,inner sep=0pt] at (8.60,1.80) {$V_0:$};
				\tvorderblock{8.75}{1.35}{1.16}{tvordergray}{0^{\ell_0}}
				\tvorderblock{9.91}{1.35}{1.16}{tvorderpink}{1^{x_{01}}}
				\tvorderblock{11.07}{1.35}{.95}{tvorderblue}{2^k}
				\tvorderblock{12.02}{1.35}{.85}{tvordergreen}{3^d}
				\tvorderblock{12.87}{1.35}{1.76}{tvorderblue}{2^{x_{02}-k}}
				\tvorderblock{14.63}{1.35}{1.82}{tvordergreen}{3^{x_{03}-d}}

				\node[anchor=east,inner sep=0pt] at (8.60,.45) {$V_2:$};
				\tvorderblock{8.75}{0}{.97}{tvorderblue}{0^k}
				\tvorderblock{9.72}{0}{1.42}{tvorderyellow}{1^{x_{12}}}
				\tvorderblock{11.14}{0}{2.05}{tvorderblue}{0^{x_{02}-k}}
				\tvorderblock{13.19}{0}{1.57}{tvorderpurple}{2^{\ell_2}}
				\tvorderblock{14.76}{0}{1.69}{tvordergreen}{3^{x_{23}}}
			\end{tikzpicture}
			\caption{The block rearrangement in Case~3, where $k=A_1$ and
				$d=x_{12}-A_0$. A block $j^m$ consists of $m$ consecutive vertices
				whose partners lie in $V_j$. }
			\label{fig:case3-block-orders}
		\end{figure}

		In particular, $2^{\ell_2}$ in the $V_2$ row consists of the
		$\ell_2$ vertices of $V_2$ whose partners also lie in $V_2$.
		The positions of all vertices in this block are unchanged.
		We keep $V_1$ and $V_3$ in their original orders and list the
		edges of $M$ consistently in corresponding blocks. In particular,
		the first $k$ type-$02$ edges in $V_0$ are also the first $k$
		type-$02$ edges in $V_2$, and the first $d$ type-$03$ edges in
		$V_0$ are also the first $d$ type-$03$ edges in $V_3$.

		The differences for types $01$, $13$, and $23$ are unchanged,
		as are those for the first $k$ type-$02$ edges and the last
		$x_{03}-d$ type-$03$ edges. Moreover, the two ends of every edge
		$uv\in M$ within a class remain consecutive, so
		$|t(u)-t(v)|=1\le r$. Thus it remains to check the differences
		for the remaining $x_{02}-k$ type-$02$ edges, all type-$12$ edges,
		and the first $d$ type-$03$ edges. As in~\eqref{eq:orderA-table}, for
		an edge of type $ij$ we write $u$ for its end in $V_i$ and $v$ for its
		end in $V_j$, where $i<j$, and we compute $t(v)-t(u)$.

		For each of the remaining type-$02$ edges $uv$, with
		$u\in V_0$ and $v\in V_2$, using $d=x_{12}-A_0$ and substituting
		the definition of $A_0$ gives
		\[
		\begin{aligned}
			t(v)-t(u)
			&=a+b-2r+\beta-\ell_0-x_{01}+x_{12}-d\\
			&=a+b-2r+\beta-\ell_0-x_{01}+A_0\\
			&=a+b-2r+\beta-\ell_0-x_{01}+(3r-a-b-\beta+\ell_0+x_{01})\\
			&=r.
		\end{aligned}
		\]
		For every type-$12$ edge $uv$, with $u\in V_1$ and $v\in V_2$,
		using $k=A_1$ and substituting the definition of $A_1$ gives
		\[
		\begin{aligned}
			t(v)-t(u)
			&=b-r+\beta-x_{01}-\ell_1+k\\
			&=b-r+\beta-x_{01}-\ell_1+A_1\\
			&=b-r+\beta-x_{01}-\ell_1 +(2r-b-\beta+\ell_1+x_{01})\\
			&=r.
		\end{aligned}
		\]

		For each of the first $d$ type-$03$ edges $uv$, with
		$u\in V_0$ and $v\in V_3$, the modified order gives
		\[
		t(v)-t(u)
		=2a+b-3r+\beta-\ell_0-x_{01}-k.
		\]
		By the first equality in~\eqref{eq:matching-counts},
		$\ell_0+x_{01}+x_{02}+x_{03}=a$, and hence
		\[
		\begin{aligned}
			t(v)-t(u)
			&=(a+b-3r+\beta+x_{03})
			+(a-\ell_0-x_{01}-x_{03}-k)\\
			&=(a+b-3r+\beta+x_{03})+(x_{02}-k)\\
			&\ge-r,
		\end{aligned}
		\]
		where the last inequality follows because the original
		type-$03$ difference in~\eqref{eq:orderA-table} is at least
		$-r$ and $x_{02}-k>0$.
		For the upper bound, substituting
		$k=A_1=2r-b-\beta+\ell_1+x_{01}$ gives
		\[
		\begin{aligned}
			r-\bigl(t(v)-t(u)\bigr)
			&=4r-2a-b-\beta+\ell_0+x_{01}+k\\
			&=4r-2a-b-\beta+\ell_0+x_{01} +(2r-b-\beta+\ell_1+x_{01})\\
			&=6r-2a-2b-2\beta+\ell_0+\ell_1+2x_{01}\\
			&=2(3r-a-b-\beta)+\ell_0+\ell_1+2x_{01}\\
			&\ge0,
		\end{aligned}
		\]
		where the last inequality follows from
		$a+b+\beta\le3r$ in~\eqref{eq:ord-hyp} and the nonnegativity
		of $\ell_0,\ell_1$, and $x_{01}$. Therefore
		$-r\le t(v)-t(u)\le r$ for these type-$03$ edges.

		Consequently, $|t(u)-t(v)|\le r$ for every $uv\in M$,
		and~\eqref{eq:matching-diff} holds in this case as well.
		This completes the proof of Claim~\ref{clm:ordering}.\qedhere

	\end{claimproof}

	\section{Concluding remarks}\label{sec:remarks}

	The proof of Theorem~\ref{thm:cubic} assigns disjoint intervals of weights to the degree classes of a balanced spanning subgraph. For $4$-regular graphs, we first reduce to a cubic graph by deleting a maximum matching and smoothing the uncovered vertices. The labels of the virtual edges determine the intervals reserved for the uncovered vertices, while the ordering in Claim~\ref{clm:ordering} allows the matched vertices to attain the remaining weights.

	Theorems~\ref{thm:cubic} and~\ref{thm:four-main} show that the bound in~\eqref{eq:regular-lower} is attained by every cubic or $4$-regular graph. For each fixed $d\ge2$, it is also attained by all sufficiently large $d$-regular graphs. This is a consequence of the following prescribed degree-frequency theorem of Cao, Tang, and Wu~\cite{CTW}.

	\begin{theorem}[Cao, Tang, and Wu~{\cite[Theorem~2.1]{CTW}}]\label{thm:CTW}
		For every $\varepsilon>0$ there exists $n_0=n_0(\varepsilon)$ such that the following holds.  Let $G$ be a $d$-regular graph on $n\ge n_0$ vertices, where $2\le d\le n^{1/12-\varepsilon}$, and let $(q_0,\ldots,q_d)$ be a vector of nonnegative integers such that
		\[
		\sum_{i=0}^{d} q_i=n,\qquad
		\sum_{i=0}^{d} iq_i\equiv0\pmod 2,\qquad\text{and}\qquad
		\left|q_i-\frac{n}{d+1}\right|\le1\ \text{ for each } i\in[0,d].
		\]
		Then $G$ has a spanning subgraph $H$ with $n_i(H)=q_i$ for each $i\in[0,d]$.
	\end{theorem}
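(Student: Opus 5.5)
The plan is to recast the statement as an $f$-factor problem and then show that a suitable degree prescription $f$ can always be found. Given the target vector $(q_0,\ldots,q_d)$, I would look for a map $f:V(G)\to[0,d]$ with $|f^{-1}(i)|=q_i$ for each $i$, such that $G$ has a spanning subgraph $H$ with $d_H(v)=f(v)$ for every $v$. Any such $H$ has $n_i(H)=q_i$, which is exactly what the theorem asserts. The condition $\sum_i iq_i\equiv0\pmod2$ is the obvious global parity requirement for an $f$-factor. The whole difficulty is to choose the \emph{placement} of the prescribed degrees so that Tutte's $f$-factor condition holds.

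\emph{Step 1 (random placement).} Assign the values of $f$ by a uniformly random bijection between $V(G)$ and the multiset containing $i$ with multiplicity $q_i$. Since $q_i=n/(d+1)\pm1$, each $f(v)$ is essentially uniform on $[0,d]$, with mean $d/2$. Concentration (Azuma/McDiarmid over the random permutation) then gives the following: for every vertex $v$ and every reasonably large set $X$, the sums $\sum_{u\in N(v)}f(u)$ and $\sum_{u\in X}f(u)$ are close to their expectations. The range $d\le n^{1/12-\varepsilon}$ is what leaves room for the union bounds.

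\emph{Step 2 (fractional factor, then rounding).} Here I would use a two-stage construction in place of Tutte's condition directly. First, build a fractional $f$-factor on the edges, with weight roughly $(f(u)+f(v))/(2d)$ on $uv$, corrected locally so that every vertex sum equals $f(v)$ exactly. The random placement keeps the required corrections small, so the weights stay in $[0,1]$. Next, round this fractional factor to an integral subgraph whose degrees are $f(v)\pm1$; this can be done by cycle-canceling on the support or by a dependent-rounding argument.

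\emph{Step 3 (absorbing the defects).} Fix the $\pm1$ errors by alternating-path switchings: pair up vertices of excess and deficit, and repair each pair by flipping an alternating path. Where such paths are blocked, swap the $f$-values of two vertices with adjacent indices instead; this keeps every class size $q_i$ unchanged.

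I expect Step 3 to be the main obstacle, and specifically the parity obstructions. The graph $G$ may be disconnected, and it may have many small dense pieces such as copies of $K_{d+1}$ or nearly closed clusters with few outgoing edges. Within such a piece, $\sum f(v)$ must have the right parity, and the prescribed degrees must be graphical. Alternating paths cannot leave such a piece, so defects there cannot be moved out. To handle this, I would first identify all components and near-isolated clusters of size $O(d)$. On each of them I would reassign $f$ deterministically, using a balanced graphical pattern, in the same way as the $K_5$ labeling in Section~\ref{subsec:small}. The global parity condition is then spent on the single remaining large piece, where the expansion of the rest of the graph should make the switchings possible. Checking that the accumulated $\pm1$ adjustments to the class sizes can be absorbed while keeping every $|q_i-n/(d+1)|\le1$ is the delicate bookkeeping step.
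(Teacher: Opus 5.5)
\paragraph{Context.} The paper does not prove this statement. It is quoted from Cao, Tang, and Wu and used as a black box in the proof of Theorem~\ref{thm:large-d}, so there is no proof in the paper to compare with. Taken on its own terms, your outline has a genuine gap right at the start, in Steps~1 and~2.

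\paragraph{Steps 1 and 2 fail.} A prescription $f$ placed independently of $G$ is almost surely not realizable, not even fractionally.
\begin{itemize}
\item If $uv\in E(G)$ with $f(u)=0$ and $f(v)=d$, then $v$ must keep all $d$ of its edges, including $uv$, while $u$ must keep none. So no $f$-factor exists, and no fractional $f$-factor with weights in $[0,1]$ exists either.
\item Under your uniform random bijection, each edge is such a $(0,d)$-edge with probability about $2/(d+1)^2$. The expected number of these edges is about $nd/(d+1)^2\approx n/d\ge n^{11/12}$, and with high probability there are many of them.
\item Your explicit weights show the same failure quantitatively. With $x_{uv}=(f(u)+f(v))/(2d)$, the sum at $v$ is
\[
\frac{f(v)}{2}+\frac{1}{2d}\sum_{u\in N(v)}f(u)\approx \frac{f(v)}{2}+\frac d4 .
\]
This misses $f(v)$ by about $|d/4-f(v)/2|$, which is of order $d$ for most vertices. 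So the claim that the random placement keeps the corrections small is false, and no better formula can rescue it, because of the $(0,d)$-obstruction above.
\end{itemize}
Consequently, what Step~3 receives is not a set of $\pm1$ defects. Step~3 would have to do essentially the whole job, and you only describe it; you do not argue it. The missing idea is that the degree prescription has to be correlated with the structure of $G$. Making that work, while hitting the exact frequency vector with deviation at most $1$ and the right parity for all $d\le n^{1/12-\varepsilon}$, is where the substance of the cited theorem lies.

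\paragraph{Small pieces.} Your treatment of components also does not work as stated, and disconnected graphs are within the scope of the statement.
\begin{itemize}
\item In a component $K_{d+1}$, no spanning subgraph has both a vertex of degree $0$ and a vertex of degree $d$.
\item So a graph such as $\frac{n}{d+1}K_{d+1}$ cannot be handled by placing a ``balanced graphical pattern'' on each component. The frequencies must be balanced across many components at once, which is exactly the bookkeeping you postpone.
\item The $K_5$ labeling of Section~\ref{subsec:small} does not supply such a pattern. Its subgraph $H$ has degree counts $(n_0,\ldots,n_4)=(1,1,2,1,0)$.
\end{itemize}

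\paragraph{Concentration.} The Step~1 concentration ``for every reasonably large set $X$'' cannot come from a union bound over all subsets. The deviation needed to survive $\binom{n}{|X|}$ events is of the same order as the mean itself. So even the probabilistic input you rely on is not established.
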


	\begin{theorem}\label{thm:large-d}
		Let $d\ge2$ be an integer.  If $G$ is a $d$-regular graph on $n$ vertices and $n$ is sufficiently large as a function of $d$, then $\tvs(G)=\ceil{(n+d)/(d+1)}$.
	\end{theorem}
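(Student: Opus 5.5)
The plan is to mimic Lemma~\ref{lem:cubic-extension} directly, using Theorem~\ref{thm:CTW} to supply a spanning subgraph with balanced degree classes. Fix $d$ and apply Theorem~\ref{thm:CTW} with, say, $\varepsilon=1/24$. For $n\ge n_0$ large enough that $d\le n^{1/24}$, its hypotheses on $d$ hold.

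Let $s=\ceil{(n+d)/(d+1)}$, and choose $q_0,\ldots,q_d\in\{\floor{n/(d+1)},\ceil{n/(d+1)}\}$ with $\sum q_i=n$. Each such $q_i$ satisfies $|q_i-n/(d+1)|\le1$. Since $s-1=\ceil{(n-1)/(d+1)}$, we have $q_i\le s-1$ whenever $(d+1)\nmid n$. When $(d+1)\mid n$ all $q_i$ equal $n/(d+1)$, and $n/(d+1)\le s-1$ holds as well.

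The parity condition $\sum iq_i\equiv0\pmod2$ is the one point needing care. If it fails, I would adjust by moving one unit between two classes: decrease some $q_i$ and increase some $q_j$ with $i-j$ odd, say $j=i\pm1$. This changes $\sum iq_i$ by an odd amount. It also keeps every $q$ within distance $1$ of $n/(d+1)$, provided we decrease a class at $\ceil{\cdot}$ and increase a class at $\floor{\cdot}$.

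If all classes are equal, i.e.\ $(d+1)\mid n$, then a changed class rises to $n/(d+1)+1$, which may exceed $s-1$. In that case I would check parity directly. The sum is $\sum iq_i=(n/(d+1))\cdot d(d+1)/2=nd/2$, which is an integer because $nd$ is even for a $d$-regular graph. Its parity is not automatic, however, so the adjustment may really be needed. Then the changed class has size $n/(d+1)+1=s$ (note $s-1=n/(d+1)$ here), which is too big by one.

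To handle this, I would use the fact that in Lemma~\ref{lem:cubic-extension}-style labelings the extreme intervals have slack. With edge labels in $\{1,s\}$, class $i$ gets weights $d+i(s-1)+j$. The lowest class can use $j\in[1,s]$, i.e.\ weights $[d+1,d+s]$, but this collides with class $1$, whose weights start at $d+s$. Instead, I would raise the top class: class $d$ with $j\in[1,s]$ has weights up to $d+d(s-1)+s=(d+1)s$, which is the maximum allowed, and lies above class $d-1$'s range. So the class allowed to have size $s$ is class $d$. I would perform the parity fix by increasing $q_d$ and decreasing $q_{d-1}$. The decrease is fine since $q_{d-1}$ only shrinks, and $d-(d-1)=1$ is odd.

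In the non-divisible case, a similar swap is needed. I would decrease a class with value $\ceil{\cdot}$ and increase an adjacent class with value $\floor{\cdot}$. Since both values occur, some adjacent pair $i,i\pm1$ has one of each, because the sequence $q_0,\ldots,q_d$ is not constant. After the swap, every class stays $\le s-1$.

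Then Theorem~\ref{thm:CTW} gives $H$ with $n_i(H)=q_i$. Label the edges of $H$ by $s$ and all other edges by $1$. Give the vertices of $V_i(H)$ the labels $1,\ldots,n_i(H)$. As in Lemma~\ref{lem:cubic-extension}, the weights in class $i$ lie in $[d+1+i(s-1),\,d+(i+1)(s-1)]$, or up to $(d+1)s$ for $i=d$. These intervals are pairwise disjoint, so the labeling is vertex irregular. Combined with \eqref{eq:regular-lower}, this gives $\tvs(G)=s$.

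The main obstacle is the parity and divisibility bookkeeping just described. Choosing the top class to absorb the extra vertex is what makes the argument go through.
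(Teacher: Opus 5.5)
Your overall plan (Theorem~\ref{thm:CTW} plus the labeling of Lemma~\ref{lem:cubic-extension}) is the paper's, and your handling of the cases $(d+1)\mid n$ and $n\equiv b\pmod{d+1}$ with $b\ge2$ works. The gap is the case $n\equiv1\pmod{d+1}$. Write $n=(d+1)a+1$. Then $s-1=\lceil (n-1)/(d+1)\rceil=a$, but $\lceil n/(d+1)\rceil=a+1=s$, so your claim that $q_i\le s-1$ whenever $(d+1)\nmid n$ is false. The constraint $|q_i-n/(d+1)|\le1$ forces $q_i\in\{a,a+1\}$, and $\sum q_i=n$ forces exactly one class $V_j$ of size $s$.

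Your device of letting the top class absorb the extra vertex does not rescue this, because parity fixes the parity of $j$: $\sum_i iq_i=a\binom{d+1}{2}+j$. When $d\equiv2\pmod4$ and $a$ is odd, $j$ must be odd, so $j\notin\{0,d\}$. Passing to $G-E(H)$ sends $j$ to $d-j$, which is again odd. For example, take $d=2$ and $n=10$, so $s=4$. The sizes are forced to be $(q_0,q_1,q_2)=(3,4,3)$. With labels $1,\dots,q_i$, the class $V_1$ gets weights $6,\dots,9$ and $V_2$ gets $9,10,11$, which collide. Since $n\equiv4\pmod 6$ is unbounded, taking $n$ large does not avoid this. (For $d$ odd the case $b=1$ cannot occur, since $n$ is even. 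For $d\equiv0\pmod4$ you could place the oversize class at $d$. The failure is specific to $d\equiv2\pmod4$, but it is real.)

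The missing idea is the paper's shift. Allow the single class $V_j$ of size $s$ at an arbitrary index $j$. Give each class $V_i$ with $i\le j$ the labels $1,\dots,q_i$, and each class with $i>j$ the labels $2,\dots,q_i+1$. Consecutive intervals $[d+1+i(s-1),\,d+1+(i+1)(s-1)]$ share only an endpoint. Classes below $j$ avoid their top endpoint and classes above $j$ avoid their bottom endpoint, so all weights are distinct and all labels stay in $[1,s]$. This handles an oversize class at any index and also covers your divisible case, without needing the parity fix to land at $d$.
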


	Theorem~\ref{thm:large-d} applies to disconnected graphs and does not require a perfect matching or a $1$-factorization.

	\begin{proof}

		The lower bound is~\eqref{eq:regular-lower}. Let
		$s=\ceil{\frac{n+d}{d+1}}$ and
		$r=s-1=\ceil{\frac{n-1}{d+1}}$.

		We first show that if $G$ has a spanning subgraph $H$ with
		$n_i(H)\le r$ for every $i\in[0,d]$, with at most one exception
		$j\in[0,d]$ for which $n_j(H)=r+1$, then $\tvs(G)\le s$.
		For each $i\in[0,d]$, let $q_i=n_i(H)$ and
		$V_i(H)=\{v_{i,0},\ldots,v_{i,q_i-1}\}$, and let $j=d+1$ if
		no class has size $r+1$.

		Label every edge of $H$ by $s$ and every other edge of $G$ by $1$.
		A vertex in $V_i(H)$ is incident with $i$ edges of $H$ and $d-i$
		other edges of $G$, so its incident edge labels sum to
		$is+(d-i)=d+ir$. Its possible weights therefore form the interval
		$I_i=[d+1+ir,\ d+1+(i+1)r]$ as its vertex label ranges over $[1,s]$.
		Consecutive intervals $I_i$ and $I_{i+1}$ share exactly one point,
		namely $d+1+(i+1)r$, and nonconsecutive intervals are disjoint.
		A class of at most $r$ vertices can avoid one endpoint of its
		interval, whereas the exceptional class of $r+1$ vertices must
		use the entire interval. We therefore assign labels starting
		from $1$ in the classes up to and including $V_j(H)$, and labels
		starting from $2$ in the classes after $V_j(H)$. The first group
		of nonexceptional classes will avoid the upper endpoints of their
		intervals, and the second group will avoid the lower endpoints.
		If $j=d+1$, there is no exceptional class and all labels start
		from $1$.

		More precisely, let
		\[
		\lambda(v_{i,\ell})=1+\ell+\mathbf1_{\{i>j\}}
		\qquad\text{for }i\in[0,d]\text{ and }\ell\in[0,q_i-1].
		\]
		Thus the term $1+\ell$ gives distinct consecutive labels within
		each class, while $\mathbf1_{\{i>j\}}$ shifts all labels in the
		classes after the exceptional class up by one. If $i<j$, the
		labels are $1,\ldots,q_i$ with $q_i\le r$; if $i=j$, they are
		$1,\ldots,r+1$; and if $i>j$, they are $2,\ldots,q_i+1$ with
		$q_i\le r$. Hence all labels lie in $[1,r+1]=[1,s]$. For every
		$v_{i,\ell}\in V_i(H)$, we have
		\[
		\wt_{\lambda}(v_{i,\ell})
		=d+ir+\lambda(v_{i,\ell})
		=d+1+ir+\ell+\mathbf1_{\{i>j\}}.
		\]

		Within each class $V_i(H)$ the weights are distinct and lie in
		$I_i$. If $i<j$, then $\ell\le q_i-1\le r-1$, so the weights are
		at most $d+(i+1)r$, one less than the upper endpoint of $I_i$.
		If $i>j$, the added $1$ makes every weight at least $d+2+ir$,
		one greater than the lower endpoint of $I_i$; also
		$\ell+1\le q_i\le r$, so the weights do not exceed the upper
		endpoint. If $j\le d$, then $q_j=r+1$ and the weights in $V_j(H)$
		fill $I_j$, including both endpoints. Thus only the exceptional
		class uses both endpoints of its interval. For each $i\in[0,d-1]$,
		the point shared by $I_i$ and $I_{i+1}$ is avoided by $V_i(H)$
		when $i<j$, and by $V_{i+1}(H)$ when $i\ge j$.
		No two classes therefore have a common weight. Hence $\lambda$
		is vertex irregular, and $\tvs(G)\le s$.

		It remains to find such a spanning subgraph $H$ when $n$ is large.
		We will first prescribe nearly equal degree-class sizes
		$q_0,\ldots,q_d$ and then apply Theorem~\ref{thm:CTW} to obtain $H$.
		Besides summing to $n$, the prescribed sizes must satisfy
		$\sum_{i=0}^{d}iq_i\equiv0\pmod2$, since this sum will equal
		$2|E(H)|$. Write $n=(d+1)a+b$ with $0\le b\le d$.
		Then $r=a$ if $b\le1$, and $r=a+1$ if $b\ge2$.
		The idea is to begin with $a$ vertices in each class and distribute
		the remaining $b$ vertices among distinct classes, choosing these
		classes to make the degree sum even. When $b=0$, we instead
		transfer one vertex between two classes if a parity correction
		is needed.

		Suppose first that $b>0$. Let $S_0=[0,b-1]$ and
		$S_1=(S_0\setminus\{b-1\})\cup\{b\}$. Both are $b$-element
		subsets of $[0,d]$, and replacing $b-1$ by $b$ increases the sum
		of their elements by exactly one. Thus exactly one choice
		$S\in\{S_0,S_1\}$ makes
		$a\binom{d+1}{2}+\sum_{i\in S}i$ even. Choose this $S$, and let
		$q_i=a+1$ for $i\in S$ and $q_i=a$ for $i\in[0,d]\setminus S$.
		Then $\sum_{i=0}^{d}q_i=(d+1)a+b=n$. The initial $a$ vertices
		in each degree-$i$ class contribute $ia$ to the prescribed degree
		sum, and the additional vertex in each class indexed by $S$
		contributes $i$. Hence
		\[
		\sum_{i=0}^{d}iq_i
		=a\sum_{i=0}^{d}i+\sum_{i\in S}i
		=a\binom{d+1}{2}+\sum_{i\in S}i,
		\]
		which is even by the choice of $S$.
		Also $|q_i-n/(d+1)|\le1$ for each $i\in[0,d]$, since
		$n/(d+1)=a+b/(d+1)$. If $b=1$, then $r=a$ and exactly one class
		has size $r+1$; if $b\ge2$, then $r=a+1$ and every class has
		size at most $r$.

		Suppose next that $b=0$, so $r=a$. Let $q_i=a$ for every
		$i\in[0,d]$. If $\sum_{i=0}^{d}iq_i=a\binom{d+1}{2}$ is odd,
		replace $q_0$ and $q_1$ by $a-1$ and $a+1$, respectively.
		This transfers one vertex from the degree-$0$ class to the
		degree-$1$ class, increasing the prescribed degree sum by one
		and making it even. It preserves $\sum_{i=0}^{d}q_i=n$ and
		$|q_i-n/(d+1)|\le1$, and creates exactly one class of size $r+1$.
		The entries remain nonnegative because $a\ge1$.
		If no correction is needed, every class has size $r$.

		In either case, $(q_0,\ldots,q_d)$ satisfies the numerical
		conditions of Theorem~\ref{thm:CTW} and the class-size bounds
		required in the first part of the proof. Fix $\varepsilon=1/24$.
		If $n$ is sufficiently large as a function of $d$, then
		$n\ge n_0(\varepsilon)$ and $d\le n^{1/12-\varepsilon}$, so
		Theorem~\ref{thm:CTW} gives a spanning subgraph $H$ of $G$ with
		$n_i(H)=q_i$ for each $i\in[0,d]$. By the first part of the proof,
		$\tvs(G)\le s$. Together with~\eqref{eq:regular-lower}, this gives
		$\tvs(G)=s$, as desired.\qedhere

	\end{proof}

	For $d=3$ and $d=4$, Theorem~\ref{thm:large-d} gives the equalities in Theorems~\ref{thm:cubic} and~\ref{thm:four-main} only for sufficiently large $n$. We conclude with the restriction of Conjecture~\ref{conj:NBSG} to regular graphs.

	\begin{conjecture}\label{conj:regular}
		Let $G$ be a $d$-regular graph on $n$ vertices. Then
		\[
		\tvs(G)=\ceil{\frac{n+d}{d+1}}.
		\]
	\end{conjecture}

	To our knowledge, for each fixed $d\ge5$, the conjecture remains open without a restriction on $n$. It would be interesting to determine whether the degree-balanced decompositions and matching reductions used here extend to these degrees.

	\section*{Declaration on the Use of AI Tools}

	In the course of this work the authors used generative AI tools.  For the cubic case, ChatGPT~Pro  located the paper of Lu\v{z}ar, Przyby\l{}o, and Sot\'ak~\cite{LPS}, on which the proof of Theorem~\ref{thm:cubic} relies.  For the $4$-regular case, the authors provided a framework of proof ideas, and ChatGPT assisted in working out the details of the proof of Theorem~\ref{thm:four-main}.  The exposition was revised with the assistance of Claude  and ChatGPT.  The authors have checked all arguments and take full responsibility for the content of this paper.

\end{document}